\documentclass[a4,12pt,reqno]{amsart}
\usepackage{amsmath}
\usepackage{amsthm}
\usepackage{graphicx}
\usepackage{amsfonts}
\usepackage{bm}
\usepackage{bbm}
\usepackage{textcomp}
\usepackage{amssymb}
\usepackage{mathrsfs}
\usepackage{enumerate}
\usepackage{hyperref}
\usepackage{geometry}
\geometry{ margin=1.0in}
\usepackage{xcolor}

\parindent=0in

\numberwithin{equation}{section}

\newtheorem{theorem}{Theorem}[section]
\newtheorem{lemma}[theorem]{Lemma}
\newtheorem{proposition}[theorem]{Proposition}
\newtheorem{corollary}[theorem]{Corollary}

\theoremstyle{definition}

\newtheorem{definition}[theorem]{Definition}
\newtheorem{remark}[theorem]{Remark}
\newtheorem{asmp}[theorem]{Assumption}

\def\d{{\mathrm d}}
\def\E{{\mathbb E}}

\def\R{{\mathbb R}}

\def\P{{\mathcal P}}

\def\L{{\mathcal L}}


\title[Mean-field agency problem with accidents]{Agency problem and mean field system of agents with moral hazard, synergistic effects and accidents}
\author{Thibaut Mastrolia and Jiacheng Zhang}
\email{mastrolia@berkeley.edu}\email{jiachengz@berkeley.edu}
\address{Department of Industrial Engineering and Operations Research, UC Berkeley\newline 4141 Etcheverry Hall, Berkeley, CA 94720, United States}
\date{\today}

\begin{document}

\begin{abstract}
We investigate the existence of an optimal policy to monitor a mean field systems of agents managing a risky project under moral hazard with accidents modeled by L\'evy processes magnified by the law of the project. We provide a general method to find both a mean field equilibrium for the agents and the optimal compensation policy under general, sufficient and necessary assumptions on all the parameters. We formalize the problem as a bilevel optimization with the probabilistic version of a mean field games which can be reduced to a controlled McKean-Vlasov SDE with jumps. We apply our results to an optimal energy demand-response problem with a crowd of consumers subjected to powercut/shortage when the variability of the energy consumption is too high under endogenous or exogenous strains. In this example, we get explicit solution to the mean field game and to the McKean-Vlasov equation with jumps. \end{abstract}

\maketitle

\section{Introduction}

The agency problem, also known as Principal-Agent problem, emerged in economical and supply chain management's literature in 70's. It occurs when two entities with different (and sometimes opposite) interests committed with a contract to manage a possibly risky project. One entity, named the ``Principal'', delegates the decision of the other named the ``agent'' and monitor the actions of the latter. Both these entities act for their own interests with different information available. Such a situation leads to different type of hazards for all the contract's committed. In this paper, we will focus on three main kinds of hazards.

\begin{itemize}
\item \textbf{Moral hazard.} The economist Paul Krugman in \cite{krugman2009return} defined moral hazard as ``any situation in which one person makes the decision about how much risk to take, while someone else bears the cost if things go badly.'' It appears when there is information asymmetry, for example if the Principal does not observe the decision of the agent, and when the contract affects the behaviors of the parties. From an operations research and economical point of views, this problem is identified as a bilevel optimization and is reduced to find a Stackelberg equilibrium between the leader (the principal) and the follower (the agent). This kind of problem in continuous-time and modeling the uncertainty of the project's dynamic with a Brownian motion has been investigated in the pioneer article \cite{holmstrom1987aggregation}. We recall the Principal-Agent paradigm in the Brownian model as stated by Holstr\"om and Milgrom. Suppose that the agent chooses an action $\alpha$ controlling the drift of a Brownian motion $W$ with variance $\sigma^2$. Equivalently, the agent modifies the law of a primal Brownian motion by choosing a probability $\mathbb P^\alpha$ and by using Girsanov theorem. The accumulated profit $X$ of the Principal has the following dynamic
\[ \d X_t=\alpha_t\d t+\sigma \d W^\alpha_t,\; X_0\in \mathbb R,\] where $W^\alpha$ is a Brownian motion under $\mathbb P^\alpha$. At time $0$, the Principal proposes a compensation $\xi$ of the entire realized path of the $X$ given at a terminal time $T>0$. The agent chooses $\alpha_t$ at any time $t\leq T$ in full knowledge of the history $\{X_s,\; s\leq t\}$ subjected to a cost of effort $c(X,\alpha_t)$. The Principal's problem is to select a sharing rule $\xi$ and instructions $\alpha$ for the agent under two standard constraints that 
\begin{itemize}
\item Incentive compatibility (IC): the agent can maximize they expected utility by following instructions proposed by the Principal; 
\footnote{Note that in moral hazard the Principal does not observe the action of the agent and so cannot impose it to the later. However, incentive compatibility condition ensures that the recommendation proposed by the Principal is optimal for the agent when the compensation $\xi$ is fixed. Under several recommendation policies, the agent is indifferent and is supposed to follow the best recommendation proposed by the Principal. This situation is known as the second-best case, as opposed to the first-best problem in which the principal imposes a level of effort to the agent, without considering the constraint (IC).}
\item Reservation constraint (R): the agent can attain a certain minimum level of expected utility from the contract $R_0$.
\end{itemize}
 The principal problem, also known as the contracting problem is formally stated as
\begin{align}
\label{pb:HM}\sup_{\xi,\alpha}\;\;&\mathbb E^\alpha[U_P(\xi-X_T)]\; \\
\label{ICHM}\text{subject to: } &\alpha \in \arg\max_{\tilde \alpha} \mathbb E^{\tilde\alpha}\Big[U_A\Big(\xi-\int_0^T c(X,\alpha_t)\d t\Big)\Big]\\
\label{RHM}\text{and }&\mathbb E^\alpha\Big[U_A\Big(\xi-\int_0^T c(X,\alpha_t)\d t\Big)\Big]\geq R_0,
\end{align}
where $U_A$ and $U_P$ are utility functions of the agent and the Principal respectively. 
The optimization problem \eqref{pb:HM} corresponds to the Principal optimization as in \cite[(1)]{holmstrom1987aggregation} while constraints \eqref{ICHM} and \eqref{RHM} are respectively the incentive compatibility condition and the reservation utility constraints \cite[(2) and (3)]{holmstrom1987aggregation}. Note that Holmstrom and Milgrom does not impose any integrability condition on $\xi$ and $\alpha$ excepting those required to define the expectation considered. The problem \eqref{pb:HM} is a bilevel optimization since the problem of the agent \eqref{ICHM} is embedded in the contracting problem \eqref{pb:HM}. It is reduced to find a Stackelberg equilibrium when the Principal leads the game with the agent and has been solved for exponential utilities in \cite[Theorem 7]{holmstrom1987aggregation} emphasizes a particular form of the contract $\xi$ given by \cite[Theorem 6, (23)]{holmstrom1987aggregation}. This problem has been investigated by the mathematical community in the last fifteen years. A particular extension to random horizon has been studied in \cite{sannikov2008continuous,possamai2020there}. The recent article \cite{cvitanic2018dynamic} has proposed a comprehensive and rigorous mathematical method to solve this problem under integrability assumption for the contract $\xi$, in the Brownian model with controlled drift and volatility by using the theory of second-order backward stochastic differential equations to solve (IC) and classical verification result for solving \eqref{pb:HM}.

\item \textbf{Synergistic effects.} Synergy refers to an interaction of entities leading to a greater impact on a whole than the simple sum of its parts. In the nature, this concept is ubiqious, for example the combination of atoms to create molecule in chemistry or in geology with the stone synergy. In human society, synergistic effects can be either beneficial or unfavorable. For instance, in medical science, a synergistic effect refers to several hazards having a greater effect on the level of risk they pose to worker health and safety than the severity of the combination of these hazards. In corporate science, synergy refers to the opportunity of a union of corporate entities to reduce, or eliminate expenses, see for example \cite{chatterjee1986types}. In finance and referring to \cite{smith1994combined}, synergy can be used for cash slack. Mathematically, we model this effect by adding a dependancy with respect to the realization of $X$ in the drift and the volatility of the accumulated profit of the Principal so that 
\[ \d X_t = b(t,\alpha_t,X_t)\d t+\sigma(t,X_t)\d W_t^\alpha.\]
When the volatility is also controlled by the agent, this framework coincides with \cite{sannikov2008continuous,cvitanic2018dynamic}. 
\item \textbf{Accidents.} We call ``accidents'' some jumping risks affecting negatively the dynamic of $X$: for example an electrical breakdown caused by a huge strain on the electric grid.  Accidents' prevention has been investigated in \cite{hartman2012optimal} for mortgages subjected to default risk, in \cite{sung1997corporate} for corporate insurance and in \cite{capponi2015dynamic} by using compound Poisson processes to model accidents. More recently, \cite{martin2021bsde,martin2021class} have investigated this problem by mixing a Brownian motion and a single jump process to model the project's profitability subjected to shutdown. Extended the framework of these paper, the dynamic of the project $X$ becomes
\[ \d X_t = b(t,\alpha_t,X_t)\d t+\sigma(t,X_t)\d W_t^\alpha - \d J_t,\]
for some jump process $J$ with compensator $\lambda$ depending possibly on the effort $\alpha$ of the agent and the realization of the process $X$ itself (synergistic effect). 
\end{itemize}

The case of several (but finite) number of agents has been studied in \cite{goukasian2010optimal,kang2013nash,elie2019contracting,mastrolia2017moral}. In these articles, the authors solve the $N-$player version of \eqref{ICHM} by finding a Nash equilibrium and then provide the optimal contract proposed by the Principal. When the number of agents goes to $+\infty$, the condition \eqref{ICHM} is reduced to find a mean field equilibrium introduced by Lasry and Lions in \cite{lasry2007mean,lasry2006jeux,lasryII2006jeux} and Huang, Caines and Malham\'e \cite{huang2006large,huang2007large} independently. In these papers, the authors use PDE method to extend the definition of a Nash equilibrium in optimal control problems when the number of players goes to $+\infty$. We refer to \cite{cardaliaguet2010notes,gueant2009reference} for pedagogical reviews of this method. A probabilistic method to find a mean field equilibrium has been investigated by Carmona and Lacker in \cite{carmona2015probabilistic}. We also refer to the books \cite{carmona2018probabilistic} for more details. This method is particularly suitable for principal-agent problem with moral hazard, since it deals with weak formulation of a stochastic control problem. Using this formulation, an extension of principal-agent problem with a mean field systems of interacting agents under moral hazard and synergy effect has been introduced in \cite{elie2019tale} when the accumulated profit of the Principal $X$ has no accidents with dynamic given by 
\[ \d X_t= b(t,\alpha_t,X_t,\mu_t)\d t+\sigma(t,X_t)\d W_t^\alpha, \]
where $\mu_t$ denotes some probability distribution. It has then been applied for energy optimal demand-response in \cite{elie2021mean,campbell2021deep}, see the dedicated paragraph below for more details. Mathematically, solving
\eqref{pb:HM} is split into two steps using two different stochastic control tools. First and following \cite{carmona2015probabilistic}, a mean field equilibrium is a pair $(\alpha^\star,\mu^\star)$ satisfying \eqref{ICHM} and the fixed point condition $\mu^\star_t=\mathbb P^{\alpha^\star,\mu^\star}\circ X_t^{-1}$. Secondly, the solution of the contracting problem \eqref{pb:HM} is reduced to a stochastic control problem of a controlled McKean-Vlasov SDE driven by a Brownian motion. This can be solved by using a verification result on the space of measure, see among others \cite{carmona2013control,bensoussan2013mean,pham2017dynamic,wu2020viscosity}.\\

We now turn to the main contributions together with the general structure of our article.\\

\textbf{Model contribution: Principal-mean field systems of Agents with accidents.}
One of the main contributions of this article is to consider the management of a stochastic process with both continuous part and discontinuous part with mean field interactions. The model is described in Section \ref{section:model}. Mathematically, the accumulated profit and loss of the Principal has the following dynamic:
\begin{equation}\label{X:intro}
\d X_t=b(t,X_t,\mu_t,\alpha_t)\d t+\sigma(t,X_t)\d W^{\alpha,\mu}_t + \int_{\zeta\neq 0} \zeta N(\d t,\d \zeta),\end{equation}
where $N$ is a L\'evy process with compensator kernel $\lambda_t^{\alpha,\mu}$ depending on the effort of a mean field system of agents $\alpha$, the process itself $X$ and its law at time $t$ denoted by $\mu_t$, modelling ``accidents'' reducing the profit (or good events increasing it depending on the distribution). The Principal thus benefits from the accumulated profit at time $T$, on some function $\pi$ of this profit reducing by the compensation $\xi$ given to the agents and subjected to additional continuous cost $g$ and accidents' cost $\ell$. The contracting problem is given below and mathematically formalized in \eqref{bilevel} 
\[V^P_0:=\sup_{(\mu,\alpha,\xi)}\; \mathbb E^{\alpha,\mu}\Big[ U_P\Big(\pi(X_T) -\xi - \int_0^T \int_{\zeta\neq 0}\ell(t,X_t,\zeta,\mu_t)N(\d t ,\d \zeta)-\int_0^T g(t,X_t,\mu_t)\d t\Big)\Big],\]
subject to 
\begin{itemize}
\item incentive compatibility $($IC$)$, where the value function of the agents is given by the expected value of an exponential utility of the compensation $\xi$ reduced by a cost of effort $c_0$ wit additional incomes/penalty given by accidents occurrences $c_1$ together with the existence of an equilibrium for the mean field system of agents characterized by a fixed point condition $\mu^\star_t=\mathbb P^{\alpha^\star,\mu^\star}\circ X_t^{-1}$ denoted by $($MFG$)$ defined in \eqref{MF};
\item a reservation utility constraint $($R$)$;
\item technical assumptions denoted by $(\mathcal I_\xi),(\mathcal I_-)$ and $(\mathcal I_\alpha)$. 
\end{itemize}

In Section \ref{section:bilevel}, we focus on solving the bilevel programming mean field version of \eqref{pb:HM} when $X$ is solution to \eqref{X:intro}. We first state the existence and the characterization of a mean field equilibrium in this framework, see Section \ref{section:MFG}. The probabilistic formulation of a mean field equilibrium with jumps component in the controlled process has been developed in \cite{benazzoli2020mean}. By using this formulation, we first find a pair $(\alpha^\star,\mu^\star)$ satisfying $($IC$)$ and $($MF$)$ in Theorem \ref{thm:ICC} and its Corollary \ref{mfgcharacterization} by providing a characterization of a mean field equilibrium with a system of controlled McKean-Vlasov equations. Then, we solve the bilevel programming in Section \ref{section:MFPDE} and Theorem \ref{thm:verif} by using verification results for stochastic control problem of McKean-Vlasov SDE with jumps introduced in \cite{burzoni2020viscosity,guo2020s}.\\

\textbf{Solving agency problems with mean field interactions and jumps: necessary and sufficient conditions.} So far, the existing articles have solved agency problems in continuous time under strong assumption on the integrability of both the contract $\xi$ proposed by the Principal, the integrability of the effort of the agent $\alpha$ and the cost induced by it. In this paper we unify the conditions imposed in the existing literature with a general one sufficient to solve our problem and necessary for the well-posedness of the quantities studied, see Remark \ref{remark:fundamental} below for more details. More precisely, unlike \cite{elie2019tale} or \cite{campbell2021deep}, we do not impose exponential integrability of any orders or convexity with respect to the process $X$ for the class of contract. We prove that weaker conditions are enough to solve the problem, see Condition $(\mathcal I_\xi)$ below. Compared to \cite{cvitanic2018dynamic} we impose neither boundedness of optimal control $\alpha$ nor integrability condition on the cost function considered. Instead, we unify this conditions into general constraints necessary again for the well-posedness of the expectations considered, see $(\mathcal I_\alpha)$ and $(\mathcal I_-)$.\\

\textbf{Application in energy optimal demand-response with powercut or shortage.} Agency problem with moral hazard and mean field systems occurs especially in electricity optimal demand response. In this kind of problem, a producer (the Principal) aims at designing an optimal electricity price policy to incentivize a crowd of consumers to manage their energy consumption sustainably. This problem has been first mathematically introduced in continuous time in \cite{aid2022optimal} for one consumer monitored by an energy producer. It has then been extended in \cite{elie2021mean} to a mean field systems of identical consumers. In particular, the authors prove that the energy consumption of the mean field systems is a fundamental factor to implement optimally a contract, improving substantially the energy management by reducing the cost for the producer. More recently, \cite{campbell2021deep} has developed neural networks method to solve this kind of problem. The purpose of Section \ref{section:electricity} is to extend these results to energy consumption (for example gas, gasoline, water, electricity) by adding possible breakdown when the strain on an electricity grid or the energy demand becomes too strong. We give in particular closed form solutions to the problem with an explicit optimal contract to monitor energy demand-response subjected to blackout in Proposition \ref{prop:example}.

\section{Principal/mean-field Agent model and Stackelberg game}\label{section:model}

\indent \quad This section is dedicated to specify Principal-mean field systems of Agents with accidents. The regime considered is the usual mean-field setting where we consider an entire crowd of Agents hired by one Principal and we focus on a representative one interacting with the theoretical distribution of the infinite number of other players.

\quad We fix a terminal time $T>0$. Let $(\Omega, {\mathcal F}, \mathbb P)$ be a probability space endowed with a Brownian motion $W$ and a Poisson random measure $N$ on $[0,T]\times \mathbb R$ with predictable intensity kernel  $\lambda^0_t(\d \zeta)$. We set
\begin{equation}\label{sde:primal}
\d X_t=\sigma(t,X_t)\d W_t + \int_{\zeta\neq 0}\zeta N(\d t,\d \zeta),\; X_0=x\in \mathbb R.
 \end{equation}

We define $\mathbb F$ the filtration generated by $X$. We denote by $\mathcal V_T$ the set of real random variable $\mathcal F_T-$measurable.
 \begin{asmp}
 The processes $\sigma(\cdot,X_\cdot)$ and the kernel $\lambda^0$ are chosen such the SDE \eqref{sde:primal} has a unique solution. 
\end{asmp}

From now on, we denote by $\tilde p$ the classical conjugate of any $p>1$ defined by 
\[ \frac1p+\frac1{\tilde p}=1\; \Longleftrightarrow\; \tilde p =\frac{p}{p-1}.\]
\subsection{Agents control strategy} We assume that the agents modifies the law of both the continuous and jumping parts of $X$ with an $\mathbb R^n$-vector of actions, $n\geq 1$. We set
\begin{itemize}
\item $\mathcal P_2^n$ denotes the set of probability measures on $\mathbb R^n$ with $n\geq 1$. When $n=1$ we omit the index $n$ and we write $\mathcal P_2$ for $\mathcal P_2^1$;
\item $\mathcal P_2^n([0,T])$ is the set of function from $[0,T]$ into $\mathcal P_2^n$;
\item $b:[0,T]\times \mathbb R\times \mathcal P_2^1\times \mathbb R^d\longrightarrow \mathbb R$ such that $b:t\in [0,T] \longmapsto b(t,\cdot)$ is predictable;
\item $K:[0,T]\times \mathbb R\times \mathbb R\times \mathcal P_2\times \mathbb R^d\longrightarrow (0,\infty)$ such that $K(t,\cdot)$ is predictable;
\item a Doleans Dade exponential process $\mathcal E^{\alpha,\mu}$ defined by
\begin{align*}
\mathcal E^{\alpha,\mu}_t&:=\exp\Big( \int_0^t \frac{b(s,X_s,\mu_s,\alpha_s)}{\sigma(s,X_s)}\d W_s-\frac12  \int_0^t \Big|\frac{b(s,X_s,\mu_s,\alpha_s)}{\sigma(s,X_s)}\Big|^2\d s\\
&\hspace{3em} +\int_0^t \int_{\zeta\neq 0} \ln\big(K(s,X_s,\zeta,\mu_s,\alpha_s)\big) N(\d s,\d \zeta)\\
&\hspace{3em}  + \int_0^t \int_{\zeta\neq 0} \Big[1- K(s,X_s,\zeta,\mu_s,\alpha_s)\big] \lambda^0_s(\d \zeta)\d s \Big).
\end{align*}
\end{itemize}

\noindent For any $\mu\in \mathcal P_2([0,T])$, we define $\widetilde{\mathcal A}^\mu$ by
\[\widetilde{\mathcal A}^\mu:=\{\alpha:[0,T]\times \Omega\longrightarrow \mathbb R^d \text{ s.t. } \mathbb E[\mathcal E^{\alpha,\mu}_T]=1 \}.\]

\noindent We deduce from Girsanov Theorem (see for example \cite[Theorem 1.35]{oksendal2019stochastic}) that there exists a probability measure $\mathbb P^{\alpha,\mu}$ with $\alpha\in \widetilde{\mathcal A}^\mu$ and $\mu_t\in \mathcal P_2$ for any $t\in [0,T]$ such that 
 $W^{\alpha,\mu}$ and $\tilde N^{\alpha,\mu}$ defined by
\[ W^{\alpha,\nu}:= W-\int_0^\cdot  \frac{b(t,X_t,\mu_t,\alpha_t)}{\sigma(t,X_t)} \d t,\quad \tilde N^{\alpha,\mu}(\d t,\d \zeta):=N(\d t,\d \zeta)-K(t,X_t,\zeta,\mu_t,\alpha_t)\lambda^0_t(\d \zeta)\d t.\]

\noindent are respectively a Brownian motion and a compensated jump measure under $\mathbb P^{\alpha,\mu}$. {The dynamic of the solution $X$ to the SDE \eqref{X:intro} under $\mathbb P^{\alpha,\mu}$ is given by}
\begin{align}\label{sde:controlled}
\d X_t&=b(t,X_t,\mu_t,\alpha_t)\d t+\sigma(t,X_t)\d W^{\alpha,\mu}_t\\
\nonumber& + \int_{\zeta\neq 0} \zeta\tilde N^{\alpha,\mu}(\d t,\d \zeta) + \int_{\zeta\neq 0} \zeta K(t,X_t,\zeta,\mu_t,\alpha_t)\lambda^0_t(\d \zeta)\d t, \end{align}

\subsection{The Stackelberg mean-field game}

For a fixed compensation $\xi\in \mathcal V_T$, the problem the system of agents is to find $(\alpha,\mu)\in \widetilde{\mathcal A}^\mu \times \P_2([0,T])$ such that
\begin{equation}\text{MFG}(\xi):\label{MF} \begin{cases}
&V_0(\mu,\xi):=\underset{\tilde\alpha\in \widetilde{\mathcal A^{\mu}}}{\sup} v_0(\tilde\alpha;\mu,\xi) = v_0(\alpha;\mu,\xi)\\
&\mu_t=\mathbb P^{\alpha,\mu}\circ X_t^{-1}
\end{cases}
\end{equation}
with
\[v_0(\alpha;\mu,\xi):=\mathbb E^{\alpha,\mu}\Big[U_A\Big(\xi-\int_0^T c_0(t,X_t,\mu_t,\alpha_t) \d t+\int_0^T \int_{\zeta\neq 0}c_1(t,X_t,\zeta,\mu_t,\alpha_t)N(\d t,\d \zeta) \Big) \Big],\]
where
\begin{itemize}
\item $U_A(x):=-e^{-\gamma x}$ where $\gamma>0$ is the risk aversion parameter of the mean field of agents;
\item $c_0$ denotes a cost induced by the action of the agent while $c_1$ is a reward depending on the jumps of $X$.
\item We denote MFG$(\xi)$ as the set of pairs $(\alpha,\mu)$ satisfying \eqref{MF}
\end{itemize}
The Stackelberg game can be thus written as a bi-level mean field optimization problem under constraints:
\begin{equation}\label{bilevel}
  \begin{aligned}
  V^P_0:=\underset{ \underset{(\alpha,\mu)\in\widetilde{\mathcal A}^\mu\times\mathcal P_2([0,T])}{ \xi\in \mathcal V_T}}{\sup}\; \mathbb E^{\alpha,\mu}\Big[ U_P\Big(&\pi(X_T) -\xi - \int_0^T \int_{\zeta\neq 0}\ell(t,X_t,\zeta,\mu_t)N(\d t ,\d \zeta)
  \\
  &-\int_0^T g(t,X_t,\mu_t)\d t\Big)\Big],
  \end{aligned}
\end{equation}
subject to 
\begin{align}
\nonumber\text{($\mathcal I_\xi$): }&\mathbb E^{\alpha,\mu}[e^{-\gamma'\xi}]+ \mathbb E^{\alpha,\mu}[e^{\eta'\xi}]<\infty, \text{ for some } \gamma'>\gamma \text{ and }\eta'>\eta \text{ uniformly in $(\alpha,\mu)$},\\
 \nonumber \text{($\mathcal I_-$): }& \int_t^T \bigg\{\int_{\zeta\neq 0}c_1(s,X_s,\zeta,\mu_s,\alpha_s)N(\d s,\d \zeta)-c_0(s,X_s,\mu_s,\alpha_s) \bigg\}\d s< \infty,\; \forall\, t\in [0,T],\; \mathbb P\text{ -a.s.},\\
 \nonumber  \text{($\mathcal I_\alpha$): }&\text{(i).}\; \mathbb E^{\alpha,\mu}[e^{-\gamma \kappa\big(-\int_0^t c_0(s,X_s,\mu_s,\alpha_s) ds+\int_0^t \int_{\zeta\neq 0}c_1(s,X_s,\zeta,\mu_s,\alpha_s)N(\d s,\d \zeta)\big)}]<\infty,\\
  \nonumber & \quad\;\text{ for some } \kappa>\frac{\gamma'}{\gamma'-\gamma}  \text{ for any $t\in [0,T]$;}\\
 \nonumber &\text{(ii).}\; \text{there exists }  \varepsilon<\varepsilon^*:=\min\big(\frac{\gamma'-\gamma}{\gamma},\frac{\kappa(\gamma'-\gamma)-\gamma'}{\gamma'+\kappa\gamma}\big)\\
 \nonumber &\quad\;\text{ such that }\mathbb E^{\alpha,\mu}[\sup_{t\in [0,T]} e^{\gamma \tilde\rho \big(-\int_0^t c_0(s,X_s,\mu_s,\alpha_s) ds+\int_0^t\int_{\zeta\neq 0}c_1(s,X_s,\zeta,\mu_s,\alpha_s)N(\d s,\d \zeta)\big)}]<\infty,
 \\
 \nonumber &\qquad\text{ with } \tilde\rho=\frac{1+\varepsilon^*}{\varepsilon^*-\varepsilon}>1;\\
 \nonumber \text{(R): }& V_0(\mu,\xi)\geq R_0,\\
\nonumber \text{(IC): }& V_0(\mu,\xi)= v_0(\alpha;\mu,\xi),\\
\nonumber \text{(MF): } & \mathbb P^{\alpha,\mu}\circ X_t^{-1}=\mu_t,
\end{align}
where 
\begin{itemize}
\item $U_P(x):=-e^{-\eta x},$ is an exponential utility function with risk aversion parameter $\eta>0$; 
\item $\pi:\mathbb R\longrightarrow \mathbb R$ is a profit function associated with the final value of $X$;
\item $\ell:\mathbb R\longrightarrow \mathbb R$ is a loss function depending on the hazards occurrences $\Delta X$;
\item $g:\mathbb R\longrightarrow \mathbb R$ is a running cost; 
\item $(\mathcal I_\xi)$ is an integrability condition on $\xi\in \mathcal V_T$;
\item $(\mathcal I_\alpha)$ and $(\mathcal I_-)$ denotes integrability conditions on either the set of admissible control $\widetilde{\mathcal A}^\mu$ or the functions $c_0$ and $c_1$;
\item (R) denotes the reservation utility constraint for some $R_0\in \mathbb R$ fixed, 
\item (IC) is the incentive compatibility condition,
\item (MF) is the mean field condition. Note that (IC) combined with (MF) corresponds to the probabilistic version of a mean field game (see \cite{carmona2015probabilistic}). In particular, MFG$(\xi)$ the set of solution $(\alpha,\mu)$ to (IC) and (MF) when $\xi$ is fixed.
\end{itemize}

We finally define $\mathcal C$ as the set of admissible contracts
\[\mathcal C:=\{\xi\in \mathcal V_T \text{ such that } (\mathcal I_\xi) \text{ and  (R)} \text{ are satisfied and MFG}(\xi)\neq \emptyset \},\]
and 
\[\mathcal A^\mu:=\{\alpha\in \widetilde{\mathcal A}^\mu \text{ such that } (\mathcal I_\alpha) \text{ is satisfied} \}.\]

\begin{remark}\label{remark:fundamental}The problem \eqref{bilevel} is written under general technical constraints on both the control set and the model parameters. Although we are not able to prove rigorously that these constraints are not only sufficient but necessary, we explain in this remark why there are natural for the wellposedness of the problem. 
\begin{itemize}
\item[-]Constraint $(\mathcal I_\xi)$ requires an integrability of order $\gamma+\varepsilon$ rather than $\gamma$. This condition is natural to ensure that the values of both the agents and the principal are finite by using H\"older Inequality, it generalizes in particular \cite{elie2019tale} which required exponential moments of any orders and \cite{campbell2021deep} without any convex and regularity assumptions on the shape of the contract;
\item[-]Constraint $(\mathcal I_-)$ ensures that the value of the agents is not degenerated when $\xi=0$ at any time $t$, that is $v_t(\mu,0)<\infty$;
\item[-]Constraint $(\mathcal I_\alpha)$ is a technical condition set to find the optimizer in Condition (IC) when $\mu,\xi$ are fixed. This condition is weaker than the existing conditions in the literature, even for the one agent-principal case. It is satisfied for either bounded controls (see for instance \cite{euch2021optimal,elie2021mean}) or integrability/bounded conditions for the costs function (see \cite{elie2019tale}); 
\item[-] Constraints (R),(IC) and (MF) come from the principal-mean field system of agents framework. These conditions ensure that the principal cannot penalize too strongly on the agents (R); the principal proposes recommendations to the agents ensuring them the optimal best-reaction actions given a fixed compensation (IC); there exists a mean field equilibrium for the interacting system of agents (MF).
\end{itemize}
\end{remark}
\section{Solving the bi-level mean field optimization problem}\label{section:bilevel}

\subsection{MFG and McKean Vlasov SDE}\label{section:MFG}

We first focus on a smooth characterization of the constraints (R) and MFG$(\xi)$ when $\xi\in \mathcal C$ is fixed.\\

\noindent For all $\mathbb F-$predictable stopping time $\tau$ with value in $[0,T]$ and $(\mu,\alpha)\in \mathcal P_2([0,T])\times \widetilde{\mathcal A}^\mu$ we define\footnote{Here $\mathcal A^\mu_\tau$ denotes the restriction of $\mathcal A^\mu$ to the control $\alpha$ defined on $[\tau,T]$.}

\begin{equation*}
  \begin{aligned}
v_\tau(\alpha;\mu,\xi):=\mathbb E^{\alpha,\mu}\bigg[-\exp\bigg(-\gamma\Big(\xi&+\int_\tau^T \int_{\zeta\neq 0}c_1(s,X_s,\zeta,\mu_s,\alpha_s)N(\d s,\d \zeta)  
\\
&-\int_\tau^T c_0(s,X_s,\mu_s,\alpha_s) \d s\Big)\bigg)\Big|\mathcal F_\tau\bigg],
  \end{aligned}
\end{equation*}
and
\[V_\tau(\mu,\xi)=\underset{\alpha \in \widetilde{\mathcal A}^\mu}{\text{ ess sup }} v_\tau(\alpha;\mu,\xi).\]

\begin{lemma}[Dynamic programming principle]\label{DPP}
Let $t\in [0,T]$, for any $\mathbb F-$predictable stopping time $\tau$ with value in $[t,T]$, the dynamic programming principle is given by
\[V_t(\mu,\xi)= \underset{\alpha\in  \widetilde{\mathcal A}^\mu}{\mathrm{ ess\, sup\; }} \mathbb E_t^{\alpha,\mu}\Big[e^{-\gamma\big(-\int_t^\tau c_0(s,X_s,\mu_s,\alpha_s) ds+\int_t^\tau   \int_{\zeta\neq 0}c_1(s,X_s,\zeta,\mu_s,\alpha_s)N(\d s,\d \zeta)\big)}V_\tau(\mu,\xi)\Big],\;  \]
for $(\mu,\xi)\in \mathcal P_2([0,T])\times \mathcal C.$
\end{lemma}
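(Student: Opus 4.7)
The plan is to establish the two inequalities $\leq$ and $\geq$ separately, after first recording the pivotal identity obtained from the tower property applied to $v_\tau$. Decomposing the exponent as $\int_t^T=\int_t^\tau+\int_\tau^T$ and pulling the $\mathcal F_\tau$-measurable factor $e^{-\gamma Y_{t,\tau}}$, where
\[Y_{t,\tau}:=-\int_t^\tau c_0(s,X_s,\mu_s,\alpha_s)\,\d s+\int_t^\tau\int_{\zeta\neq 0} c_1(s,X_s,\zeta,\mu_s,\alpha_s)\,N(\d s,\d \zeta),\]
out of the conditional expectation, the definition of $v_\tau$ yields, for every $\alpha\in\widetilde{\mathcal A}^\mu$, the identity
\[v_t(\alpha;\mu,\xi)=\mathbb E^{\alpha,\mu}_t\bigl[e^{-\gamma Y_{t,\tau}}\,v_\tau(\alpha;\mu,\xi)\bigr].\]

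For the easy direction ``$\leq$'', I would fix $\alpha\in\widetilde{\mathcal A}^\mu$. Since $v_\tau(\alpha;\mu,\xi)\leq V_\tau(\mu,\xi)\leq 0$ and $e^{-\gamma Y_{t,\tau}}>0$, the pivotal identity together with monotonicity of the conditional expectation gives $v_t(\alpha;\mu,\xi)\leq\mathbb E^{\alpha,\mu}_t[e^{-\gamma Y_{t,\tau}}V_\tau(\mu,\xi)]$, and taking the essential supremum over $\alpha$ delivers the upper bound on $V_t(\mu,\xi)$.

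For the reverse inequality ``$\geq$'', I would first establish the upward-directed (lattice) property of the family $\{v_\tau(\beta;\mu,\xi):\beta\in\widetilde{\mathcal A}^\mu\}$. Given $\beta_1,\beta_2\in\widetilde{\mathcal A}^\mu$, set $A:=\{v_\tau(\beta_1;\mu,\xi)\geq v_\tau(\beta_2;\mu,\xi)\}\in\mathcal F_\tau$ and define the concatenation $\beta_3:=\beta_1\,\mathbf 1_{[0,\tau)}+(\beta_1\mathbf 1_A+\beta_2\mathbf 1_{A^c})\,\mathbf 1_{[\tau,T]}$. The multiplicative decomposition $\mathcal E^{\beta_3,\mu}_T=\mathcal E^{\beta_3,\mu}_\tau\cdot\bigl(\mathcal E^{\beta_3,\mu}_T/\mathcal E^{\beta_3,\mu}_\tau\bigr)$ combined with the martingale property of the Dol\'eans-Dade exponentials at $\tau$ confirms $\beta_3\in\widetilde{\mathcal A}^\mu$, and by construction $v_\tau(\beta_3;\mu,\xi)=\max(v_\tau(\beta_1;\mu,\xi),v_\tau(\beta_2;\mu,\xi))$; iterating produces an increasing sequence $(\beta_n)\subset\widetilde{\mathcal A}^\mu$ with $v_\tau(\beta_n;\mu,\xi)\uparrow V_\tau(\mu,\xi)$ almost surely. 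Then, for an arbitrary $\alpha\in\widetilde{\mathcal A}^\mu$, I set $\tilde\alpha^n:=\alpha\,\mathbf 1_{[0,\tau)}+\beta_n\,\mathbf 1_{[\tau,T]}$. The same factorization shows $\tilde\alpha^n\in\widetilde{\mathcal A}^\mu$, and because the Dol\'eans-Dade densities of $\tilde\alpha^n$ and $\alpha$ coincide up to $\tau$, the restrictions of $\mathbb P^{\tilde\alpha^n,\mu}$ and $\mathbb P^{\alpha,\mu}$ to $\mathcal F_\tau$ agree. Applying the pivotal identity to $\tilde\alpha^n$ (whose tail after $\tau$ equals $\beta_n$) and invoking monotone convergence on the non-negative increments $v_\tau(\beta_n;\mu,\xi)-v_\tau(\beta_1;\mu,\xi)\geq 0$ then yields
\[V_t(\mu,\xi)\;\geq\;\sup_n v_t(\tilde\alpha^n;\mu,\xi)\;=\;\mathbb E^{\alpha,\mu}_t\bigl[e^{-\gamma Y_{t,\tau}}V_\tau(\mu,\xi)\bigr],\]
and a final essential supremum over $\alpha\in\widetilde{\mathcal A}^\mu$ closes the lower bound.

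The main obstacle I anticipate is twofold. First, verifying that the concatenated controls $\beta_3$ and $\tilde\alpha^n$ genuinely belong to $\widetilde{\mathcal A}^\mu$, which hinges on the multiplicative stability of the Dol\'eans-Dade exponential across the predictable stopping time $\tau$, now involving both the Brownian drift change and the Poisson random measure kernel $K$. Second, justifying the exchange of the monotone limit with $\mathbb E^{\alpha,\mu}_t[e^{-\gamma Y_{t,\tau}}\,\cdot\,]$; this requires $e^{-\gamma Y_{t,\tau}}v_\tau(\beta_1;\mu,\xi)\in L^1(\mathbb P^{\alpha,\mu})$, which is precisely what the integrability conditions $(\mathcal I_\xi)$, $(\mathcal I_-)$ and $(\mathcal I_\alpha)$ on the contract, the running costs and the controls are tailor-made to supply.
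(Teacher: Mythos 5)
The paper states Lemma~\ref{DPP} without any proof, so there is nothing to compare against; your argument supplies the missing justification and is, in substance, correct. It is the standard weak-formulation DPP proof: the tower-property (``pivotal'') identity, the trivial inequality by monotonicity for one direction, and for the other the upward-directedness of the family $\{v_\tau(\beta;\mu,\xi)\}_\beta$ obtained by pasting controls at $\tau$ on the $\mathcal F_\tau$-measurable set $A$, followed by conditional monotone convergence. Two points deserve slightly more care than your sketch gives them. First, the admissibility of the pasted controls: $\mathbb E[\mathcal E^{\beta_3,\mu}_T]=1$ follows because $\mathbb E[\mathcal E^{\beta_i,\mu}_T]=1$ forces each nonnegative local martingale $\mathcal E^{\beta_i,\mu}$ to be a true martingale, so that $\mathbb E[\mathcal E^{\beta_i,\mu}_T/\mathcal E^{\beta_i,\mu}_\tau\,|\,\mathcal F_\tau]=1$ on $A$ (resp.\ $A^c$); this is exactly the multiplicative stability you invoke, and it also justifies the identification of $\mathbb P^{\tilde\alpha^n,\mu}$ with $\mathbb P^{\alpha,\mu}$ on $\mathcal F_\tau$. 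Second, the passage from upward-directedness to a monotone sequence attaining $V_\tau$ uses the classical characterization of the essential supremum (a countable subfamily attains it, which the lattice property lets you monotonize); worth citing rather than asserting. Finally, the integrability needed to apply conditional monotone convergence without a $-\infty+\infty$ ambiguity, namely $e^{-\gamma Y_{t,\tau}}v_\tau(\beta_1;\mu,\xi)\in L^1(\mathbb P^{\alpha,\mu})$, is obtained from $(\mathcal I_\xi)$ and $(\mathcal I_\alpha)$ by the same Young/H\"older estimate the authors use in Step~1 of the proof of Theorem~\ref{thm:ICC}; you correctly identify this as the role of those assumptions.
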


\noindent As result of \cite{kunita2004representation} (see also \cite[Proposition 11.2.8.1]{jeanblanc2009mathematical}) we have the following result
\begin{lemma}[Martingale representation]\label{MRT}

For any $\mathbb F-$martingale $M$ there exists predictable processes with $Z:[0,T]\times \Omega \longrightarrow \mathbb R$ and $U:[0,T]\times\mathbb R\setminus\{0\}\times \Omega\longrightarrow\mathbb R$ such that
\[ \int_0^T |Z_s\sigma(s,X_s)|^2ds + \int_0^T \int_{|\zeta|>1}|U_s(\zeta)|\lambda_s^0(\d\zeta)\d s <\infty, \text{ a.s.},\]
and
$$ M_t = M_0 + \int_0^t Z_s dW^{\alpha,\mu}_s + \int_0^t  \int_{\zeta\neq 0}U_s(\zeta) \widetilde N^{\alpha,\mu}(\d s,\d \zeta). $$
\end{lemma}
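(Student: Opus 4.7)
The plan is to reduce the statement to the predictable representation property (PRP) established by Kunita in \cite{kunita2004representation} for filtrations generated jointly by a Brownian motion and a Poisson random measure, and then to transfer the representation to the equivalent measure $\mathbb{P}^{\alpha,\mu}$.

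First, I would verify that the natural filtration $\mathbb{F}$ of $X$ coincides (up to $\mathbb{P}$-null sets) with the filtration generated by $(W,N)$; under the standing non-degeneracy of $\sigma$ this is routine because the continuous and the jump parts of $X$ can be disentangled pathwise---the continuous part recovers $W$ through the quadratic variation $\int_0^\cdot \sigma(s,X_s)^2\,\d s$, while the jumps of $X$ recover $N$ via $\Delta X_t = \int \zeta\,N(\{t\},\d\zeta)$. Under $\mathbb{P}$, Kunita's theorem then supplies, for any $\mathbb{F}$-martingale $M^0$, predictable integrands $(Z^0,U^0)$ with the stated integrability properties such that
\[
M^0_t = M^0_0 + \int_0^t Z^0_s\,\d W_s + \int_0^t\int_{\zeta\neq 0}U^0_s(\zeta)\,\widetilde N^0(\d s,\d \zeta),
\]
where $\widetilde N^0 := N - \lambda^0$ is the $\mathbb{P}$-compensated measure.

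Second, I would transport this representation to $\mathbb{P}^{\alpha,\mu}$. Given an $\mathbb{F}$-martingale $M$ under $\mathbb{P}^{\alpha,\mu}$, Bayes' rule gives that $M\,\mathcal E^{\alpha,\mu}$ is an $\mathbb{F}$-martingale under $\mathbb{P}$, to which Step 1 applies. Applying It\^o's product formula to $M = (M\,\mathcal E^{\alpha,\mu})/\mathcal E^{\alpha,\mu}$, combined with the Girsanov identities $W_t = W^{\alpha,\mu}_t + \int_0^t \frac{b(s,X_s,\mu_s,\alpha_s)}{\sigma(s,X_s)}\,\d s$ and $\widetilde N^0(\d t,\d \zeta) = \widetilde N^{\alpha,\mu}(\d t,\d \zeta) + (K(t,X_t,\zeta,\mu_t,\alpha_t)-1)\,\lambda^0_t(\d \zeta)\,\d t$, reorganises the drift terms so that their sum vanishes (by the martingale property of $M$ under $\mathbb{P}^{\alpha,\mu}$) and leaves the desired representation against $W^{\alpha,\mu}$ and $\widetilde N^{\alpha,\mu}$. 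The predictable integrands $Z$ and $U$ are read off explicitly from $Z^0$, $U^0$ and the Girsanov parameters, and the integrability bounds in the statement follow from those on $(Z^0,U^0)$ because $\sigma$ enters only multiplicatively and $K$ is strictly positive.

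The main obstacle is the stability of PRP under the equivalent change of measure: one has to rule out the appearance of any orthogonal martingale when passing from $\mathbb{P}$ to $\mathbb{P}^{\alpha,\mu}$. This is exactly the content of \cite[Proposition~11.2.8.1]{jeanblanc2009mathematical}, which asserts that PRP is preserved under locally equivalent changes of measure; beyond the integrability condition implicit in $\alpha\in\widetilde{\mathcal A}^\mu$ that ensures $\mathcal E^{\alpha,\mu}$ is a true $\mathbb{P}$-martingale (and hence that $\mathbb{P}^{\alpha,\mu}\sim\mathbb{P}$), no further verification is needed, so the argument reduces to the two-step transfer outlined above.
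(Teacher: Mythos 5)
Your proposal is correct and follows exactly the route the paper takes: the paper offers no argument beyond citing Kunita's representation theorem and \cite[Proposition 11.2.8.1]{jeanblanc2009mathematical}, and your two steps (identifying $\mathbb{F}$ with the filtration of $(W,N)$ to invoke Kunita's PRP under $\mathbb{P}$, then transferring it to $\mathbb{P}^{\alpha,\mu}$ via Bayes' rule, It\^o's product formula and the Girsanov identities) are precisely the content of those two citations spelled out. No discrepancy to report.
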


\noindent {We denote by $\mathcal G(\mathbb R)$ the set of functions from $\mathbb R$ into $\mathbb R$.} Now we define for any $(t,x,z,u,\mu,\alpha)\in [0,T]\times \mathbb R\times \mathbb R\times \mathcal G(\mathbb R)\times \mathcal P_2\times\mathbb R^d$,

\begin{equation*}
  \begin{aligned}
 h(t,x,z,u,\mu,\alpha):=& \int_{\zeta\neq 0} \frac1{\gamma }\Big(1-e^{-\gamma  (u(\zeta)+c_1(t,x,\zeta,\mu,\alpha))}\Big) K(t,x,\zeta,\mu,\alpha)\lambda_t^0(\d \zeta)\\ 
&+b(t,x,\mu,\alpha)z-\frac12\gamma\sigma^2(t,x)z^2-c_0(t,x,\mu,\alpha),
  \end{aligned}
\end{equation*} 

\noindent and define $H(t,x,z,u,\mu):=\sup_\alpha h(t,x,z,u,\mu,\alpha)$. We denote by $\hat\alpha(t,x,z,u,\mu)$ a maximizer of $H$ with $t,x,z,u,\mu$ fixed. Let $\hat{\mathcal A}^\mu$ the set of processes $\hat\alpha$ such that $\hat\alpha_t:=\hat\alpha(t,X_t,Z_t,{U_t},\mu_t)$ for any $t$. 
\begin{asmp}
For any $(t,x,z,u,\mu)\in [0,T]\times \mathbb R\times \mathbb R\times \mathcal G(\mathbb R)\times \mathcal P_2([0,T])$
\[ \hat{\mathcal A}(t,x,z,u,\mu) \neq \emptyset \text{  and  } |\hat{\mathcal A}(t,x,z,u,\mu) |<\infty.\]
\end{asmp}

\begin{definition}\label{def:Q}
We  denote by $\mathcal Q$ the set of pair of predictable processes $(Z,U)$ with $Z:[0,T]\times \Omega \longrightarrow \mathbb R$ and $U:[0,T]\times\mathbb R\setminus\{0\}\times \Omega\longrightarrow\mathbb R$ such that
\begin{itemize}
\item[(i).] $\int_0^T |Z_s\sigma(s,X_s)|^2ds + \int_0^T \int_{|\zeta|>1}|U_s(\zeta)|\nu(\d\zeta)\d s+ \int_0^T \int_{|\zeta|\leq 1}|U_s(\zeta)|^2\nu(\d\zeta)\d s <\infty, \text{ a.s}.$
\item[(ii).] The integrability condition $(\mathcal I_\xi)$ is satisfied for $Y_0\in \mathbb R,$ and $\xi=Y_T^{Y_0,Z,U}$ where
\begin{align*}
Y_t^{Y_0,Z,U}:=Y_0+\int_0^t Z_s\sigma(s,X_s) d W_s+ \int_0^t \int_{\zeta\neq 0}U_s(\zeta)  N(\d s,\d \zeta)-\int_0^t H(s,X_s,Z_s,U_s,\mu_s)\d s.
\end{align*}

\item[(iii).] There exists $\tilde\gamma>\gamma$ such that

\begin{equation*} 
\sup_{(\mu,\alpha)\in \mathcal P_2\times \widetilde{\mathcal A}^\mu} ~ \mathbb E^{\alpha,\mu}[\sup_{t \in [0,T]} \; e^{-\tilde\gamma Y_t^{Y_0,Z,U}}] < \infty.\end{equation*}
\end{itemize}
\end{definition}
\begin{theorem}[Incentive compatibility condition and admissible contract]\label{thm:ICC}
Let $\xi\in \mathcal C$, then
\begin{itemize}
\item[1).] There exists a unique triplet $(Y_0,Z,U)\in (-\infty,\hat R_0]\times \mathcal Q$ such that \begin{align*}
\xi=Y_T^{Y_0,Z,U}&=Y_0+\int_0^T Z_s\d X^c_s+ \sum_{s\leq T} U_s(\Delta X_s)-\int_0^T H(s,X_s,Z_s,U_s,\mu_s)\d s,
\end{align*}
where $X^c$ is the continuous part of $X$ and $\Delta X$ is the jumping part.
\item[2).] $V_0(\mu,\xi)=-e^{-\gamma Y_0}$ and $\hat\alpha\in \hat{\mathcal A}^\mu$ is optimal with respect to \text{(IC)} for $\mu$ satisfying (MF) with $\alpha=\hat\alpha(\cdot,X_\cdot,Z_\cdot,U_\cdot,\mu_\cdot)$.
\end{itemize}
\end{theorem}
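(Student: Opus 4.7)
The strategy is to leverage the exponential form of $U_A$ together with the dynamic programming principle (Lemma~\ref{DPP}) and the martingale representation theorem (Lemma~\ref{MRT}) to characterize the agent's value function as the terminal value of a BSDE whose driver is exactly the Hamiltonian $H$. Fix $\xi\in\mathcal C$ and $\mu$ realized by some $(\alpha,\mu)\in\text{MFG}(\xi)$; since $V_T(\mu,\xi)=-e^{-\gamma\xi}<0$ and the exponential utility stays negative throughout, we can uniquely define $Y_t$ via $V_t(\mu,\xi)=-e^{-\gamma Y_t}$, which forces $Y_T=\xi$. Introducing the accumulated cost $\Gamma^\alpha_t:=\int_0^t c_0(s,X_s,\mu_s,\alpha_s)\d s-\int_0^t\int_{\zeta\neq 0}c_1(s,X_s,\zeta,\mu_s,\alpha_s)N(\d s,\d\zeta)$, the multiplicative form of the DPP yields that $\mathcal M^{\alpha,\mu}_t:=e^{\gamma\Gamma^\alpha_t}V_t(\mu,\xi)$ is a $\mathbb P^{\alpha,\mu}$-supermartingale for every $\alpha\in\widetilde{\mathcal A}^\mu$, and a true martingale under $\mathbb P^{\hat\alpha,\mu}$ at the optimizer $\hat\alpha$. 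The passage from local to true martingale and the class (D) property rely on $(\mathcal I_\xi)$ and $(\mathcal I_\alpha)$ combined with H\"older's inequality using the hand-tuned exponents $\gamma'$, $\kappa$, $\tilde\rho$.

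Next I would apply Lemma~\ref{MRT} to $\mathcal M^{\hat\alpha,\mu}$ under $\mathbb P^{\hat\alpha,\mu}$, producing predictable processes $(\tilde Z,\tilde U)$ with $\d\mathcal M^{\hat\alpha,\mu}_t=\tilde Z_t\d W_t^{\hat\alpha,\mu}+\int_{\zeta\neq 0}\tilde U_t(\zeta)\tilde N^{\hat\alpha,\mu}(\d t,\d\zeta)$. Writing $\mathcal M^{\hat\alpha,\mu}_t=-\exp(-\gamma(Y_t-\Gamma^{\hat\alpha}_t))$ and applying It\^o's formula with jumps to $a\mapsto -e^{-\gamma a}$, I match the Brownian and Poisson coefficients to define $(Z,U)$ from $(\tilde Z,\tilde U)$ and recover the semimartingale decomposition of $Y$ under $\mathbb P^{\hat\alpha,\mu}$. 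Undoing the Girsanov transform via $\d W^{\hat\alpha,\mu}=\d W-(b/\sigma)\d t$ and $\tilde N^{\hat\alpha,\mu}(\d s,\d\zeta)=N(\d s,\d\zeta)-K\lambda^0_s(\d\zeta)\d s$, the dynamics of $Y$ under $\mathbb P$ take the BSDE form of Definition~\ref{def:Q} with drift $-h(\cdot,\hat\alpha)$. The zero-drift condition for $\mathcal M^{\hat\alpha,\mu}$ together with the $\mathbb P^{\alpha,\mu}$-supermartingale inequality for arbitrary $\alpha$ forces the pointwise identification $h(s,X_s,Z_s,U_s,\mu_s,\hat\alpha_s)=H(s,X_s,Z_s,U_s,\mu_s)=\sup_\alpha h$, so $\hat\alpha_s=\hat\alpha(s,X_s,Z_s,U_s,\mu_s)\in\hat{\mathcal A}^\mu$ is the pointwise feedback maximizer, well-defined by the standing finiteness assumption. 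Evaluating at $t=0$ gives $V_0(\mu,\xi)=-e^{-\gamma Y_0}$, and the reservation constraint (R) translates to $Y_0\leq\hat R_0:=-\gamma^{-1}\log(-R_0)$; properties (i)--(iii) of Definition~\ref{def:Q} for $(Z,U)$ are inherited directly from $\xi\in\mathcal C$.

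For uniqueness, given two representations $(Y_0,Z,U),(Y_0',Z',U')\in(-\infty,\hat R_0]\times\mathcal Q$ of the same $\xi$, I subtract the two BSDEs and use the sandwich $h(z,u,\hat\alpha(z',u'))-h(z',u',\hat\alpha(z',u'))\leq H(z,u)-H(z',u')\leq h(z,u,\hat\alpha(z,u))-h(z',u',\hat\alpha(z,u))$ to linearize $H(Z,U,\mu)-H(Z',U',\mu)$ as an affine expression in $(Z-Z',U-U')$ with predictable coefficients; combining this with the uniform exponential moment in Definition~\ref{def:Q}(iii) and $(\mathcal I_\xi)$ yields the integrability needed to close a Gronwall-type estimate on $\mathbb E^{\alpha,\mu}[|Y_t-Y'_t|^2]$, forcing $Y_0=Y_0'$, $Z=Z'$ in $\d t\otimes\d\mathbb P$, and $U=U'$ in $\d t\otimes\lambda^0_t(\d\zeta)\otimes\d\mathbb P$. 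The principal obstacle throughout is the non-Lipschitz nature of $H$ (quadratic in $Z$ through $-\tfrac{\gamma}{2}\sigma^2 z^2$ and exponential in $U$ through $e^{-\gamma(u+c_1)}$) combined with the jump structure and mean-field coupling; the exponents in $(\mathcal I_\xi)$, $(\mathcal I_\alpha)$ and Definition~\ref{def:Q}(iii) are precisely calibrated so that a single H\"older argument controls the exponential moments appearing uniformly in $(\alpha,\mu)$ in the existence, verification and uniqueness steps.
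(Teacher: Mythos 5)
Your existence and verification argument follows the paper's route essentially step for step: the multiplicative process $e^{\gamma\Gamma^\alpha_t}V_t(\mu,\xi)$ (the paper's $\widetilde Y$) is a $\mathbb P^{\alpha,\mu}$-supermartingale by Lemma \ref{DPP}, its integrability is secured by Young/H\"older with the exponents from $(\mathcal I_\xi)$ and $(\mathcal I_\alpha)$, the representation Lemma \ref{MRT} plus It\^o applied to $Y_t=-\gamma^{-1}\log(-V_t(\mu,\xi))$ produce $(Z,U)$, the drift is identified with $H$ from the supermartingale property for arbitrary $\alpha$ combined with optimality, and the class-(D) submartingale property of $e^{-\gamma\overline Y}$ gives $v_0(\alpha;\mu,\xi)\le-e^{-\gamma Y_0}$ with equality exactly at maximizers of $H$. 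Two small points. First, the paper passes through the Doob--Meyer decomposition for an \emph{arbitrary} admissible $\alpha$ and extracts both $I_t=\int_0^tH\,\d s$ and the vanishing of the predictable pure-jump part $A^{\alpha,\mu;d}$ from the sign structure of $\sup_\alpha\mathbb E^{\alpha,\mu}[\widetilde Y_T-M_T^{\alpha,\mu}]=0$; applying the representation directly to the martingale at the optimizer, as you do, is legitimate since $\xi\in\mathcal C$ forces $\mathrm{MFG}(\xi)\neq\emptyset$, but you still need to rule out a nontrivial predictable jump component, which your sketch skips. Second, the reservation constraint $-e^{-\gamma Y_0}\ge R_0$ gives $Y_0\ge\hat R_0$, not $Y_0\le\hat R_0$ (the theorem statement carries the same sign slip, corrected in Corollary \ref{mfgcharacterization}). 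Also, establishing Definition \ref{def:Q}(iii) for the constructed $(Z,U)$ is not ``inherited directly'': it is the content of the paper's Step 3, a separate Doob--plus--H\"older computation with the exponents $\varepsilon^*,\rho_\varepsilon,\tilde\rho_\varepsilon$.

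The genuine gap is in uniqueness. You propose subtracting the two BSDEs, linearizing $H(Z,U)-H(Z',U')$ via the envelope sandwich, and closing a Gronwall estimate on $\mathbb E^{\alpha,\mu}[|Y_t-Y_t'|^2]$. The linearized coefficients are not bounded here: the $z$-part of $H$ is quadratic, so the factor multiplying $Z-Z'$ is of order $\gamma\sigma^2(Z+Z')$, and the jump part is exponential in $u$, so the factor multiplying $U-U'$ involves ratios of $e^{-\gamma(U+c_1)}$; a plain $L^2$ Gronwall bound therefore does not close without an additional measure change or BMO-type control that you neither state nor derive from $(\mathcal I_\xi)$, $(\mathcal I_\alpha)$, or Definition \ref{def:Q}(iii), and asserting that the exponents are ``precisely calibrated'' does not substitute for this. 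The paper avoids the issue entirely: by the verification step, each representation satisfies $-e^{-\gamma Y_t^{Y_0,Z,U}}=V_t(\mu,\xi)$, so the two candidate processes coincide pathwise as $-\gamma^{-1}\log(-V_t(\mu,\xi))$, and $Y_0=Y_0'$, $Z=Z'$, $U=U'$ follow from uniqueness of the semimartingale decomposition with no a priori estimate at all. Replace the Gronwall argument by this identification, or supply the missing exponential-transform machinery if you want the analytic route.
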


\begin{proof}
The proof is divided in several steps and extend the proof of \cite[Theorem 3.1]{euch2021optimal} to L\'evy processes. \\

\noindent \textit{Step 1. Doob-Meyer decomposition.} 
We define \[\widetilde Y_t(\alpha,\mu,\xi):=V_t(\mu,\xi)e^{-\gamma \big(-\int_0^t c_0(s,X_s,\mu_s,\alpha_s) ds+\int_0^t\int_{\zeta\neq 0}c_1(s,X_s,\zeta,\mu_s,\alpha_s)N(\d s,\d \zeta)\big)},\; t\in [0,T], \]
Note that
\begin{align*}
\sup_{(\alpha,\mu)}\mathbb E^{\alpha,\mu}[|\widetilde Y_t(\alpha,\mu,\xi)|]&= \sup_{\alpha,\mu}\mathbb E^{\alpha,\mu}[|V_t(\mu,\xi)| e^{-\gamma  \big(-\int_0^t c_0(s,X_s,\mu_s,\alpha_s) ds+\int_0^t\int_{\zeta\neq 0}c_1(s,X_s,\zeta,\mu_s,\alpha_s)N(\d s,\d \zeta)\big)}]\\
&= \sup_{\alpha,\mu}\mathbb E^{\alpha,\mu}[e^{-\gamma\xi}e^{-\gamma\big(-\int_0^T c_0(s,X_s,\mu_s,\alpha_s) \d s+\int_0^T \int_{\zeta\neq 0}c_1(s,X_s,\zeta,\mu_s,\alpha_s)N(\d s,\d \zeta)  \big)}].\end{align*}

By Young inequality, $(\mathcal I_\alpha)(i)$ and $(\mathcal I_\xi)$ there exists some constant $C_\kappa$ such that
\begin{align*}&\sup_{(\alpha,\mu)}\mathbb E^{\alpha,\mu}[|\widetilde Y_t(\alpha,\mu,\xi)|]\\
&\leq    C_\kappa( \sup_{\alpha,\mu}\mathbb E^{\alpha,\mu}[e^{-\gamma \kappa \big(-\int_0^T c_0(s,X_s,\mu_s,\alpha_s) ds+\frac1{\tilde\kappa} \int_0^T\int_{\zeta\neq 0}c_1(s,X_s,\zeta,\mu_s,\alpha_s)N(\d s,\d \zeta)\big)}] + \sup_{\alpha,\mu}\mathbb E^{\alpha,\mu}[e^{-\gamma \tilde\kappa \xi}])\\
&<\infty
\end{align*}

Hence, from Lemma \eqref{DPP}, we deduce that $\widetilde Y(\alpha,\mu,\xi)$ defined a $\mathbb P^{\alpha,\mu}-$supermartingale for any $(\mu,\alpha)\in \text{MFG}(\xi)$. Up to a selection of a c\`adl\`ag version of $\widetilde Y$, we have the following Doob-Meyer decomposition
\[ \widetilde Y_t(\alpha,\mu,\xi)= M_t^{\alpha,\mu}-A^{\alpha,\mu ; c}_t-A_t^{\alpha,\mu; d}, \]

\noindent where $M^{\alpha,\mu}$ is a $\mathbb P^{\alpha,\mu}-$martingale, $A^{\alpha,\mu}:=A^{\alpha,\mu; c}+A^{\alpha,\mu; d}$ is an integrable non-decreasing predictable process such that $A^{\alpha,\mu; c}_0=0$ and $A^{\alpha,\mu; d}_0=0$, with a pathwise continuous component $A^{\alpha,\mu;c}$ and a piecewise constant predictable process $A^{\alpha,\mu; d}$. From the martingale representation theorem given by Lemma \ref{MRT}, there exists two predictable processes $\widetilde Z^{\alpha,\mu}$ and $\widetilde U^{\alpha,\mu}$ satisfying (ii) in Definition \ref{def:Q} such that
\begin{align*} M_t^{\alpha,\mu}&=\widetilde Y_0(\alpha,\mu,\xi)+\int_0^t \widetilde Z_s^{\alpha,\mu}\sigma(s,X_s) \d W_s^{\alpha,\mu} +\int_0^t  \int_{\zeta\neq 0} \widetilde U^{\alpha,\mu}_s(\zeta)\widetilde N^{\alpha,\mu}(\d s,\d \zeta),\\
&=v_0(\mu,\xi)+\int_0^t \widetilde Z_s^{\alpha,\mu}\sigma(s,X_s) \d W_s^{\alpha,\mu} +\int_0^t \int_{\zeta\neq 0} \widetilde U^{\alpha,\mu}_s (\zeta) \widetilde N^{\alpha,\mu}(\d s,\d \zeta),
\end{align*}

\noindent \textit{Step 2. Change of variable and It\^o's decomposition.}
Note that $V_t(\mu,\xi)$ is a $\mathbb P^{\alpha,\mu}-$a.s. negative process as a consequence of $(\mathcal I_\xi)$ and $(\mathcal I_-)$. We define a new process $Y$ by
\[V_t(\mu,\xi)=-e^{-\gamma Y_t},\; t\in [0,T]. \]
By applying It\^o's formula, we get

\begin{align*}
Y_t&=Y_0+\int_0^t Z_s\sigma(s,X_s) d W_s+ \int_0^t \int_{\zeta\neq 0} U_s(\zeta) N(\d t,\d \zeta)-I_t-A_t,
\end{align*}
where\footnote{Note that $Z$ and $U$ are independent of $\alpha,\mu$ since they can be respectively expressed with $\langle Y, X\rangle$ and $[Y,N]$ and $A$ is the pure predictable pure jumps of $Y$. }
\[Z_t:=-\frac{\widetilde Z^{\alpha,\mu}_{t-}}{\gamma \widetilde Y_{t-}(\alpha,\mu,\xi)} ,\quad U_t(\zeta):=-\frac1{\gamma } \log\Big(1+\frac{\widetilde U^{\alpha,\mu}_t(\zeta) }{\widetilde Y_{t-}(\alpha,\mu,\xi)}\Big)- c_1(t,X_t,\zeta,\mu_t,\alpha_t) ,\]
\begin{equation*}
  \begin{aligned}
  I_t =&\int_0^t \Big[\int_{\zeta\neq 0} \frac1{\gamma }\Big(1-e^{-\gamma  (U_s(\zeta)+c_1(s,X_s,\zeta,\mu_s,\alpha_s))}\Big) K(s,X_s,\zeta,\mu_s,\alpha_s)\lambda_s^0(\d \zeta)\d s 
  \\
  &- \frac1{\gamma \widetilde Y_{t-}(\alpha,\mu,\xi)}\d A^{\alpha,\mu ; c}_s+\Big(b(s,X_s,\mu_s,\alpha_s)Z_s-\frac12\gamma\sigma^2(s,X_s)Z_s^2-c_0(s,X_s,\mu_s,\alpha_s)\Big)\d s\Big]\\
  &= \int_0^t \big[h(s,X_s,Z_s,U_s,\mu_s,\alpha_s) \d s- \frac1{\gamma \widetilde Y_{t-}(\alpha,\mu,\xi)}\d A^{\alpha,\mu ; c}_s \big]
  \end{aligned}
\end{equation*}

and
\[A_t=\frac1\gamma \sum_{s\leq t}\log\Big(1-\frac{\Delta A^{\alpha,\mu; d}_s}{\widetilde Y_{t-}(\alpha,\mu,\xi)} \Big).\]

\noindent We know that $v_T(\mu,\xi)=-1$. Then,
 \[
 0=\sup_{\alpha}\mathbb E^{\alpha,\mu}[\widetilde Y_T(\alpha,\mu,\xi)] - v_0(\xi)=\sup_{\alpha}\mathbb E^{\alpha,\mu}[\widetilde Y_T(\alpha,\mu,\xi) -M_T^{\alpha,\mu}].
\]

\noindent Since 
\begin{align*}& \mathbb E^{\alpha,\mu}[\widetilde Y_T(\alpha,\mu,\xi) -M_T^{\alpha,\mu}]\\
&=\gamma\mathbb E^{\alpha,\mu}\Big[\int_0^T\widetilde Y_{t-}(\alpha,\mu,\xi)\Big( \d I_t- h\big(t,X_t,Z_t,U_t,\mu_t,\alpha_t\big)\d t+\frac{\d A_t^{\alpha,\mu; d}}{\gamma \widetilde Y_{t-}(\alpha,\mu,\xi)}\Big)\Big],
\end{align*}
we get
\[0=\sup_{\alpha}\mathbb E^{\alpha,\mu}\Big[\int_0^T\widetilde Y_{t-}(\alpha,\mu,\xi)\Big( \d I_t-  h\big(t,X_t,Z_t,U_t,\mu_t,\alpha_t\big)\d t+\frac{\d A_t^{\alpha,\mu; d}}{\gamma \widetilde Y_{t-}(\alpha,\mu,\xi)}\Big)\Big].\]
Note that $\tilde Y$ is negative, $\d I_t - h\big(t,X_t,Z_t,U_t,\mu_t,\alpha_t\big)\d t \geq 0$ and $dA_t^{\alpha,\mu; d}$ is nonnegative. Therefore

\begin{equation*}
I_t = \int_0^t H(s,X_s,Z_s,U_s,\mu_s)\d s,\text{ and } A_t^{\alpha,\mu;d}=0. \end{equation*}

\noindent \textit{Step 3. Admissibility of $Z$ and $U$.} We now prove the integrability result $(Z,U)\in \mathcal Q$.

Note that for $\varepsilon>0$ 
\begin{align*}
&\sup_{(\alpha,\mu)}\mathbb E^{(\alpha,\mu)}[|\widetilde Y_T(\alpha,\mu,\xi)|^{1+\varepsilon}]
\\
=& \sup_{\alpha,\mu}\mathbb E^{\alpha,\mu}\big[|v_T(\mu,\xi)|^{1+\varepsilon} e^{-\gamma (1+\varepsilon) (-\int_0^T c_0(s,X_s,\mu_s,\alpha_s) ds+\int_0^T\int_{\zeta\neq 0}c_1(s,X_s,\zeta,\mu_s,\alpha_s)N(\d s,\d \zeta))}\big]\\
=& \sup_{(\alpha,\mu)}\mathbb E^{\alpha,\mu}\big[e^{-\gamma (1+\varepsilon)\xi} e^{-\gamma (1+\varepsilon) (-\int_0^T c_0(s,X_s,\mu_s,\alpha_s) ds+\int_0^T\int_{\zeta\neq 0}c_1(s,X_s,\zeta,\mu_s,\alpha_s)N(\d s,\d \zeta))}\big].\end{align*}

By Young inequality, for any $p>1$ there exists a constant $C>0$ such that
\begin{align*}&\sup_{(\alpha,\mu)}\mathbb E^{(\alpha,\mu)}[|\widetilde Y_T(\alpha,\mu,\xi)|^{1+\varepsilon}]\\&\leq C\Big( \sup_{(\alpha,\mu)}\mathbb E^{\alpha,\mu}[e^{-\gamma p (1+\varepsilon)\xi}] +   \sup_{(\alpha,\mu)}\mathbb E^{\alpha,\mu}\big[e^{-\gamma (1+\varepsilon)\tilde p (-\int_0^T c_0(s,X_s,\mu_s,\alpha_s) ds+\int_0^T\int_{\zeta\neq 0}c_1(s,X_s,\zeta,\mu_s,\alpha_s)N(\d s,\d \zeta))}\big]\Big).
\end{align*}
We choose $p$ such that $1<p\leq \frac{\gamma'}{\gamma (1+\varepsilon)}$,  for $\varepsilon\leq \tilde{\varepsilon}:= \frac{\gamma'-\gamma}{\gamma}>0$. Hence,  $\sup_{(\alpha,\mu)}\mathbb E^{\alpha,\mu}[e^{-\gamma p (1+\varepsilon)\xi}] <\infty$ because of  $(\mathcal I_\xi)$. By using $(\mathcal I_\alpha)$ we know that for any $\varepsilon\in [0,\varepsilon^*]$ and
\[ \kappa \geq \frac{\gamma'}{\gamma'-\gamma(1+\varepsilon)} (1+\varepsilon)= \frac{\frac{\gamma'}{\gamma(1+\varepsilon)}}{\frac{\gamma'}{\gamma(1+\varepsilon)}-1} (1+\varepsilon),\] 
where $\varepsilon^*=\min(\tilde \varepsilon,\frac{\kappa(\gamma'-\gamma)-\gamma'}{\gamma'+\kappa\gamma}).$
Since $1< p \leq \frac{\gamma'}{\gamma (1+\varepsilon)}$, we have

\[\kappa >\frac{\frac{\gamma'}{\gamma(1+\varepsilon)}}{\frac{\gamma'}{\gamma(1+\varepsilon)}-1} (1+\varepsilon)\geq  \frac{p}{p-1} (1+\varepsilon)= \tilde p(1+\varepsilon).\]
Then, we get from $(\mathcal I_\alpha)$,
\[  \sup_{(\alpha,\mu)}\mathbb E^{\alpha,\mu}\big[e^{-\gamma (1+\varepsilon)\tilde p (-\int_0^T c_0(s,X_s,\mu_s,\alpha_s) ds+\int_0^T\int_{\zeta\neq 0}c_1(s,X_s,\zeta,\mu_s,\alpha_s)N(\d s,\d \zeta))}\big]<\infty.\]

Therefore, for any $\varepsilon\in [0,\varepsilon^*]$ we have $\sup_{(\alpha,\mu)}\mathbb E^{\alpha,\mu}[|\widetilde Y_T(\alpha,\mu,\xi)|^{1+\varepsilon}]<\infty.$ Since $\widetilde Y(\alpha,\mu,\xi)$ is a negative super martingale, we deduce from Doob's Inequality that 

\begin{equation}\label{ytilde:varepsilon} \mathbb E^{\alpha,\mu}[\sup_{t\in [0,T]}|\widetilde Y_t(\alpha,\mu,\xi)|^{1+\varepsilon}] \leq C_\varepsilon \mathbb E^{\alpha,\mu}[|\widetilde Y_T(\alpha,\mu,\xi)|^{1+\varepsilon}] <+\infty. \end{equation}
We now recall that 
\[ -e^{-\gamma Y_t} = V_t(\mu,\xi)= \widetilde Y_t(\alpha,\mu,\xi)e^{\gamma (-\int_0^t c_0(s,X_s,\mu_s,\alpha_s) ds+\int_0^t\int_{\zeta\neq 0}c_1(s,X_s,\zeta,\mu_s,\alpha_s)N(\d s,\d \zeta))}.\]

Let $\varepsilon<\varepsilon^*$ and $q_\varepsilon=1+\varepsilon$. From H\"older Inequality with \[\rho_\varepsilon=\frac{1+\varepsilon^*}{1+\varepsilon},\; \tilde\rho_\varepsilon =\frac{\rho_\varepsilon}{\rho_\varepsilon-1}= \frac{1+\varepsilon^*}{\varepsilon^*-\varepsilon},\] together with \eqref{ytilde:varepsilon} and $(\mathcal I_\alpha)$(ii) we get

\begin{align*}&\mathbb E^{\alpha,\mu}[\sup_{t\in [0,T]}e^{-\gamma q_\varepsilon Y_t}]\\= &\mathbb E^{\alpha,\mu}[\sup_{t\in [0,T]} |\widetilde Y_t(\alpha,\mu,\xi)|^{q_\varepsilon}e^{\gamma q_\varepsilon \big(-\int_0^t c_0(s,X_s,\mu_s,\alpha_s) ds+\int_0^t\int_{\zeta\neq 0}c_1(s,X_s,\zeta,\mu_s,\alpha_s)N(\d s,\d \zeta)\big)}]\\
\leq &\mathbb E^{\alpha,\mu}[\sup_{t\in [0,T]} |\widetilde Y_t(\alpha,\mu,\xi)|^{1+\varepsilon^*}]^{1/\rho_\varepsilon}\; \mathbb E^{\alpha,\mu}\big[\sup_{t\in [0,T]} e^{\gamma \tilde\rho_\varepsilon (-\int_0^t c_0(s,X_s,\mu_s,\alpha_s) ds+\int_0^t\int_{\zeta\neq 0}c_1(s,X_s,\zeta,\mu_s,\alpha_s)N(\d s,\d \zeta))}\big]^{1/\tilde \rho_\varepsilon}\\
&<\infty. \end{align*}
Thus, $(Z,U)\in \mathcal Q$.\\

\textit{Step 4. Verification and optimality.} We verify that (IC) is satisfied for $\alpha^\star=\hat \alpha$ maximizer of $H$ when $\xi=Y_T^{Y_0,Z,U}$. For any $(\mu,\alpha)\in\mathcal P_2^1\times \widetilde{\mathcal A}^\mu$ we set
\begin{align*}
\overline Y^{Y_0,Z,U}_t:= Y_t^{Y_0,Z,U}-\int_0^t c_0(s,X_s,\mu_s,\alpha_s) \d s+\int_0^t \int_{\zeta\neq 0}c_1(s,X_s,\zeta,\mu_s,\alpha_s)N(\d s,\d \zeta).
\end{align*}
By It\^o's formula we have
\begin{align*}&\;\;\;\d e^{-\gamma  \overline Y^{Y_0,Z,U}_t}
\\
&= \gamma e^{-\gamma  \overline Y^{Y_0,Z,U}_t} \Big(-Z_t\sigma(t,X_t)\d W^{\alpha,\mu}_t -\int_{\zeta\neq 0} \frac1{\gamma }\Big(1-e^{-\gamma  (u(\zeta)+c_1(t,X_t,\zeta,\mu_t,\alpha_t))}\Big)\tilde N^{\alpha,\mu}(\d s,\d \zeta) \Big) \\
&+ \gamma e^{-\gamma  \overline Y^{Y_0,Z,U}_t} (H(t,X_t,Z_t,U_t(\cdot),\mu_t)-h(t,X_t,Z_t,U_t(\cdot),\mu_t,\alpha_t)) \d t.\end{align*}
By definition of $H$, we know that $e^{-\gamma \overline Y^{Y_0,Z,U}}$ is a $\mathbb P^{\alpha,\mu}-$ local submartingale of class $(D)$ by using $(\mathcal I_\xi),(\mathcal I_\alpha)$ and H\"older Inequality. Hence,  $e^{-\gamma \overline Y^{Y_0,Z,U}}$ is a $\mathbb P^{\alpha,\mu}-$ submartingale. We deduce that 
\begin{align*}
v_0(\alpha;\mu,\xi)&=\mathbb E^{\alpha,\mu}[-e^{-\gamma \overline Y^{Y_0,Z,U}_T}]\\
&= - e^{-\gamma Y_0} -\mathbb E^{\alpha,\mu}\Big[\int_0^T  \gamma e^{-\gamma  \overline Y^{Y_0,Z,U}_t} (H(t,X_t,Z_t,U_t,\mu_t)-h(t,X_t,Z_t,U_t,\mu_t,\alpha_t)) \d t\Big]\\
&\leq -e^{-\gamma Y_0},
\end{align*}
and equality holds if and only if $\alpha$ is a maximizer of $H$.\\

\textit{Step 5. Uniqueness of the representation.} We finally prove that the uniqueness of the triplet $(Y_0,Z,U)$ associated with a contract $\xi\in \mathcal C$ such that $\xi=Y_T^{Y_0,Z,U}$. Assume that there exist $(Y_0,Z,U)\in \mathbb R\times \mathcal Q$ and $(Y_0',Z',U')\in \mathbb R\times \mathcal Q$ such that $\xi=Y_T^{Y_0,Z,U}=Y_T^{Y_0',Z',U'}.$ Therefore, we deduce from the previous step that $e^{-\gamma \overline Y^{Y_0,Z,U}}$ and $e^{-\gamma \overline Y^{Y_0',Z',U'}}$ are two martingales with the same terminal value at time $t=T$. Hence, $Y_t^{Y_0,Z,U}=Y_t^{Y_0',Z',U'}$ for any time $t\leq T$. Consequently, $Y_0=Y_0'$, $Z=Z'$ and $U=U'$.
\end{proof}

{Before turning to the solution to the Stackelberg mean field game \ref{bilevel}, we deduce from the previous theorem a fundamental corollary to characterize any $(\alpha^\star,\mu^\star)\in \text{MFG}(\xi)$ as a solution of a system of controlled McKean-Vlasov stochastic differential equations. We introduce below this system with solution $(X,Y)$ defined respectively in \eqref{sde:controlled} and Definition \ref{def:Q} (ii).}

\begin{equation*}
\text{MKV}(Z,U)\begin{cases}
&\d X_t=  b(t,X_t,\mu^\star_t,\alpha^\star_t)\d t+\sigma(t,X_t)\d W^{\alpha^\star,\mu^\star}_t + \int_{\zeta\neq 0} \zeta N(\d t,\d \zeta),\\
 & X_0=x,
  \\[0.8em]
& \d Y_t^{Y_0,Z,U}  =\big(b(t,X_t,\mu^\star_t,\alpha^\star_t)Z_t-H(t,x,Z_t,U_t,\mu_t) \big)\d t +Z_t\sigma(t,X_t) d W_t^{\alpha^\star,\mu^\star} 
\\
&\qquad\qquad\quad+\int_{\zeta\neq 0} U_t(\zeta)  N(\d t,\d \zeta),\\[0.5em]
 & Y_0^{Y_0,Z,U}=Y_0,\\[0.8em]
& \alpha^\star_t=\hat{\alpha}(t,X_t,Z_t,U_t, \mu^\star_t),\quad \mu^\star_t=\mathbb P^{\alpha^\star,\mu^\star}\circ X_t^{-1},
  \end{cases}
\end{equation*}
The following corollary states the equivalence of MFG$(\xi)$ and MKV$(Z,U)$.

\begin{corollary}[(MFG)$(\xi)$ characterization]\label{mfgcharacterization}
\[\mathcal C \equiv \Xi:=\{\xi=Y_T^{Y_0,Z,U} \text{ where $Y^{Y_0,Z,U}$ is the solution to \text{MKV}(Z,U) with } (Z,U)\in\mathcal Q,\; Y
_0\geq \hat R_0 \},\]
with $\hat R_0=-\frac{\log(-R_0)}{\gamma}$.
In other words, any mean field equilibrium $(\alpha^\star,\mu^\star)\in\text{(MFG)}(\xi)$ is related to the solution $(X,Y)$ of a controlled McKean-Vlasov SDE driven by a L\'evy process with 
\[\alpha^\star_t:=\hat\alpha(t,X_t,Z_t,U_t,\mu^\star)\in \hat{\mathcal A}(t,X_t,Z_t,U_t, \mu^\star),\quad \mu^\star_t=\mathbb P^{\alpha^\star,\mu^\star}\circ X_t^{-1}. \]

\end{corollary}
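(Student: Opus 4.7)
The plan is to establish the two inclusions $\mathcal{C} \subseteq \Xi$ and $\Xi \subseteq \mathcal{C}$ separately, with Theorem \ref{thm:ICC} doing most of the work in each direction. The corollary is really a repackaging of that theorem emphasizing the fixed-point loop that closes the mean field equilibrium, so the proof will be short.

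For the inclusion $\mathcal{C}\subseteq\Xi$: fix $\xi\in\mathcal{C}$ and pick any $(\alpha^\star,\mu^\star)\in\mathrm{MFG}(\xi)$, which is nonempty by definition of $\mathcal{C}$. Theorem \ref{thm:ICC} applied with this $\mu^\star$ yields a unique triplet $(Y_0,Z,U)\in\mathbb{R}\times\mathcal{Q}$ with $\xi=Y_T^{Y_0,Z,U}$ and identifies the optimizer in $(\mathrm{IC})$ as $\alpha^\star_t=\hat\alpha(t,X_t,Z_t,U_t,\mu^\star_t)$. The reservation constraint $(\mathrm{R})$ reads $V_0(\mu^\star,\xi)=-e^{-\gamma Y_0}\geq R_0$, which rearranges to $Y_0\geq -\log(-R_0)/\gamma=\hat R_0$. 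Combining the $\mathbb{P}^{\alpha^\star,\mu^\star}$-dynamics of $X$ from \eqref{sde:controlled} with the dynamics of $Y^{Y_0,Z,U}$ from Definition \ref{def:Q}(ii) (after rewriting $\int U_s\,dN=\int U_s\,d\tilde N^{\alpha^\star,\mu^\star}+\int U_sK\lambda^0$ and absorbing the compensator into the $H$-drift) together with the fixed-point $\mu^\star_t=\mathbb{P}^{\alpha^\star,\mu^\star}\circ X_t^{-1}$, one sees that $(X,Y^{Y_0,Z,U})$ is exactly the solution of $\mathrm{MKV}(Z,U)$. Hence $\xi\in\Xi$.

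For the reverse inclusion $\Xi\subseteq\mathcal{C}$: suppose $\xi=Y_T^{Y_0,Z,U}$ comes from a solution of $\mathrm{MKV}(Z,U)$ with $(Z,U)\in\mathcal{Q}$ and $Y_0\geq\hat R_0$. I would then check the three defining conditions of $\mathcal{C}$ in turn. Condition $(\mathcal{I}_\xi)$ is exactly property (ii) of Definition \ref{def:Q}. Condition $(\mathrm{R})$ follows from $-e^{-\gamma Y_0}\geq -e^{-\gamma \hat R_0}=R_0$, evaluated against the equilibrium measure delivered by the MKV system. For $\mathrm{MFG}(\xi)\neq\emptyset$, the pair $(\alpha^\star,\mu^\star)$ built into $\mathrm{MKV}(Z,U)$ satisfies $(\mathrm{MF})$ by construction; to get $(\mathrm{IC})$ I would replay the verification argument of Step 4 in the proof of Theorem \ref{thm:ICC}, which only requires $(Z,U)\in\mathcal{Q}$ and the defining property of $\hat\alpha$ as a maximizer of $H$, yielding $v_0(\alpha^\star;\mu^\star,\xi)=-e^{-\gamma Y_0}=V_0(\mu^\star,\xi)$. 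Thus $(\alpha^\star,\mu^\star)\in\mathrm{MFG}(\xi)$ and $\xi\in\mathcal{C}$.

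The only genuinely delicate point is conceptual rather than technical: in the forward direction the optimizer $\alpha^\star$ from Theorem \ref{thm:ICC} is characterized only given a fixed $\mu^\star$, and one must still invoke the hypothesis $(\alpha^\star,\mu^\star)\in\mathrm{MFG}(\xi)$ to close the fixed-point loop; in the reverse direction the fixed-point is built into the definition of $\mathrm{MKV}(Z,U)$. Beyond this bookkeeping, the calculation is a straightforward matching of drifts between the $Y$-equation in Definition \ref{def:Q}(ii) and in $\mathrm{MKV}(Z,U)$, so no new estimates are needed.
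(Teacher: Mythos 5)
Your proposal is correct and follows exactly the route the paper intends: the corollary is stated without a separate proof as a direct consequence of Theorem \ref{thm:ICC}, and your two inclusions (representation plus $(\mathrm{R})\Leftrightarrow Y_0\geq \hat R_0$ for $\mathcal C\subseteq\Xi$; Definition \ref{def:Q}(ii) plus the Step~4 verification for $\Xi\subseteq\mathcal C$) are precisely that deduction made explicit. Two harmless remarks: your derivation $Y_0\geq\hat R_0$ is the correct reading (the theorem's statement $(-\infty,\hat R_0]$ is a sign slip in the paper), and the compensator rewriting you mention is not actually needed to match the $Y$-dynamics to $\mathrm{MKV}(Z,U)$ since both keep the raw measure $N$ --- only the Girsanov shift of $W$ producing the $bZ$ drift is required.
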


\subsection{Mean field PDE}\label{section:MFPDE}

\noindent The previous results in Corollary \ref{mfgcharacterization} enable us to rewrite the problem of the principal as follow:

\begin{align*}
&V^P_0=\sup_{(Y_0,Z,U,\alpha^\star,\mu^\star)\in [\hat R_0,+\infty)\times \mathcal Z\times \mathcal U \times \text{MFG}(\widehat Y^{Y_0,Z,U}_T)}\; \mathbb E^{\alpha^\star,\mu^\star}\Big[ U_P\Big(\pi(X_T) -\widehat Y_T^{Y_0,Z,U}\Big)\Big],\\[0.5em]
\nonumber &\text{subjected to }\text{($\mathcal I_\xi$), ($\mathcal I_{\alpha^\star}$), $(R)$},
\end{align*}
with

\begin{equation*}
 \widehat{\text{MKV}}(Z,U) \begin{cases}
&\d X_t=  b(t,X_t,\mu^\star_t,\alpha^\star_t)\d t+\sigma(t,X_t)\d W^{\alpha^\star,\mu^\star}_t + \int_{\zeta\neq 0} \zeta N(\d t,\d \zeta),\\
 & X_0=x,
  \\[0.8em]
& \d \widehat Y_t  =\big(b(t,X_t,\mu^\star_t,\alpha^\star_t)Z_t-\hat H(t,X_t,Z_t,U_t,\mu_t) \big)\d t +Z_t\sigma(t,X_t) d W_t^{\alpha^\star,\mu^\star} \\ 
  &\qquad\qquad\quad+\int_{\zeta\neq 0} \big(U_t(\zeta)  +\ell(t,X_t,\mu_t^\star)\big)N(\d t,\d \zeta),\\[0.5em]
 & \widehat Y_0^{Y_0,Z,U}=Y_0,\\[0.8em]
& \alpha^\star_t=\hat{\alpha}(t,X_t,Z_t,U_t, \mu^\star),\quad \mu^\star_t=\mathbb P^{\alpha^\star,\mu^\star}\circ X_t^{-1},
  \end{cases}
\end{equation*}
where $\hat H(t,x,z,u,\mu)=H(t,x,z,u,\mu)-g(t,x)$.
Note that the reservation utility constraint (R) is saturated, so that the problem is reduced to 
\begin{align}\label{bilevel:reduced}
&V^P_0=\sup_{(Z,U,\alpha^\star,\mu^\star)\in  \mathcal Z\times \mathcal U \times \text{MFG}(\widehat Y^{\hat R_0,Z,U}_T)}\; \mathbb E^{\alpha^\star,\mu^\star}\Big[ U_P\Big(\pi(X_T) -\widehat Y_T^{\hat R_0,Z,U}\Big)\Big],\\[0.5em]
\nonumber &\text{subjected to }\text{($\mathcal I_\xi$), ($\mathcal I_{\alpha^\star}$).}
\end{align}
The optimization problem \eqref{bilevel:reduced} corresponds to a stochastic control problem of the system of controlled McKean-Vlasov SDEs with jumps $\widehat{\text{MKV}}(Z,U)$. See \cite{guo2020s,burzoni2020viscosity}.\\

We introduce the operator $\L^{z,u,\mu}_t$ on the set of twice continuously differentiable function on $\R^2$ defined as
\begin{equation*}
  \begin{aligned}
  &\L^{z,u,\alpha,\mu}_t v (x,y)\\
  & =b\big(t,\alpha,x,\mu_t\big)\big(\partial_x v(x,y)+z\partial_y v(x,y)\big) -\hat H(t,z,u,x,\mu)\partial_y v(x,y)\\
  &+\int_{\zeta\neq 0}\Big(v\big(x+\zeta,y+u_t(\zeta)+\ell(t,x,\mu)\big)-v(x,y)\Big)K(t,x,\zeta,\mu,\alpha)\lambda_t^0(\d \zeta)
  \\
  &+\frac12 \sigma^2(t,x)\big(\partial_{xx}v (x,y)+2z\partial_{xy}v (x,y)+z^2\partial_{yy}v (x,y)\big) ,
  \end{aligned}
\end{equation*}
for any $(t,x,y,z,u,\mu,\alpha)\in [0,T]\times \mathbb R\times \mathbb R\times \mathbb R\times \mathcal G(\mathbb R)\times \mathcal P_2(\mathbb R)\times \mathbb R^d$. The corresponding HJB equation is 
\begin{equation}\label{HJB}
  \begin{cases}
  -\partial_t V(t,\rho) - \sup_{z,u} \big\langle \rho,\sup_{\alpha\in \hat{\mathcal A}(t,x,z,u,\rho^x)}\L^{z,u,\alpha,\rho^x}_t D_m V\big\rangle=0,\quad (t,\rho)\in [0,T)\times \mathcal P_2(\mathbb R^2)
  \\
  V(T,\rho)=\int_{\R^2} U_P(\pi(x)-y)\rho(\d x\d y),
  \end{cases}
\end{equation}
where $\rho^x$ denotes the marginal of $\rho$ with respect to $X$. 

\begin{theorem}[Verification and optimal contract]\label{thm:verif}
Let $V$ be a continuous map from $[0, T] \times \P_2^2([0,T])$ into $\R$ such that $v(t, \cdot)$ is twice continuously differentiable on $\P_2^2([0,T])$ and such that $V(\cdot, \rho)$ is continuously differentiable on $[0, T]$.  Suppose that $V$ is solution to \eqref{HJB} satisfying
\begin{equation*}
\sup_{t\in [0,T],\rho\in \mathcal{K}}\bigg[\int_{\R^2}\Big(|\nabla_xD_mV(t,\rho,x)|^2+|\nabla^2_{x}D_mV(t,\rho,x)|^2\Big)\mu(\d x) \bigg]<\infty,
\end{equation*}
for any compact set $\mathcal{K}\in \P_2^2([0,T])$. Moreover, the supremum is attained for some optimizers denoted by $Z^\star_t,U^\star_t,\alpha^\star_t$ for any $(t, \rho)\in [0, T] \times \P_2^2([0,T])$ such that $(t, \rho) \to Z^\star_t,U^\star_t$ and $(t,\rho,x)\to \alpha^\star_t$  are continuous.
Let $(X_0,Y_0)$ be a square random variable with law $\rho_0\in\P_2^2([0,T])$ and assume moreover that the following McKean–Vlasov SDE
\begin{equation*}
 \begin{cases}
&\d X_t=  b\big(t,X_t,\rho_t^x,\alpha^\star_t(\rho_t,D_mV,X_t)\big)\d t+\sigma(t,X_t)\d W^{\alpha^\star,\rho^x}_t + \int_{\zeta\neq 0} \zeta N(\d t,\d \zeta),
  \\[0.8em]
& \d \widehat Y_t  =\big(b\big(t,X_t,\rho_t^x,\alpha^\star_t(\rho_t,D_mV,X_t)\big)Z^\star_t(\rho_t,D_mV)-\hat H\big(t,X_t,Z^\star_t(\rho_t,D_mV),U^\star_t(\rho_t,D_mV),\rho^x_t\big) \big)\d t 
\\
  &+Z^\star_t(\rho_t,D_mV)\sigma(t,X_t) d W_t^{\alpha^\star,\rho^x} +\int_{\zeta\neq 0} \big(U^\star_t(\rho_t,D_mV,\zeta)  +\ell(t,X_t,\rho^x_t)\big)N(\d t,\d \zeta),\\[0.8em]
& \rho_t=\mathbb P^{\alpha^\star,\rho^x}\circ (X_t,Y_t)^{-1},\;\rho^{x}_t = \mathbb P^{\alpha^\star,\rho^x}\circ (X_t)^{-1}.
  \end{cases}
\end{equation*} 
admits a solution $(X,Y)$. Then if the tuple of control $(Z^\star_t,U^\star_t)\in\mathcal Q$ and if $\alpha_t^\star\in\hat A(t,X_t,Z_t^\star,U_t^\star,\rho^x)$ we have $V(0, \rho_0) = V_0^P$ and $Z^\star,U^\star,\alpha^\star$ are optimal in the problem of the Principal.

\end{theorem}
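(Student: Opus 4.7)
The plan is to apply the It\^o--Lions chain rule on Wasserstein space with jumps to $t \mapsto V(t,\rho_t)$ along the joint law $\rho_t = \PP^{\alpha^\star,\rho^x} \circ (X_t,\widehat Y_t)^{-1}$ generated by the closed-loop system $\widehat{\text{MKV}}(Z,U)$ driven by an arbitrary admissible tuple $(Z,U,\alpha^\star,\rho^x)$ for which Corollary \ref{mfgcharacterization} applies, so that $\xi = \widehat Y_T^{\hat R_0,Z,U} \in \C$ and the mean-field constraint is automatic. Using the assumed regularity of $V$ on $[0,T] \times \P_2^2([0,T])$ together with the Wasserstein It\^o formula with compensated Poisson measure developed in \cite{burzoni2020viscosity,guo2020s}, I obtain
\begin{equation*}
V(T,\rho_T) = V(0,\rho_0) + \int_0^T \Big(\partial_t V(t,\rho_t) + \big\langle \rho_t, \L^{Z_t,U_t,\alpha^\star_t,\rho^x_t}_t D_m V(t,\rho_t,\cdot,\cdot)\big\rangle\Big)\,\d t + M_T,
\end{equation*}
where $M$ aggregates the $W^{\alpha^\star,\rho^x}$- and $\widetilde N^{\alpha^\star,\rho^x}$-stochastic integrals of $D_m V$ and its spatial derivatives evaluated along $(X,\widehat Y)$.

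The next step is to promote $M$ from a local martingale to a true martingale on $[0,T]$. The uniform bound $\sup_{t,\rho \in \mathcal K}\int (|\nabla_x D_m V|^2 + |\nabla^2_x D_m V|^2)\mu(\d x) < \infty$ controls the Brownian and small-jump quadratic variations, while the large-jump and drift contributions are controlled by the exponential moments built into $(\I_\xi)$ and $(\I_\alpha)$ together with $(Z,U) \in \Q$. A localisation by stopping times $\tau_n \uparrow T$, followed by H\"older's Inequality and Doob's maximal inequality exactly as in Step 3 of the proof of Theorem \ref{thm:ICC}, yields uniform integrability and permits passing to the limit in expectation.

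Invoking the HJB equation \eqref{HJB} now gives
\begin{equation*}
\partial_t V(t,\rho_t) + \big\langle \rho_t, \L^{Z_t,U_t,\alpha^\star_t,\rho^x_t}_t D_m V\big\rangle \le 0,
\end{equation*}
with equality along the candidate optimiser $(Z^\star,U^\star,\alpha^\star)$ by hypothesis. Taking expectations and using the terminal condition $V(T,\rho) = \int U_P(\pi(x)-y)\,\rho(\d x \d y)$ yields
\begin{equation*}
V(0,\rho_0) \ge \E^{\alpha^\star,\rho^x}\big[U_P(\pi(X_T) - \widehat Y_T)\big],
\end{equation*}
with equality attained along the closed-loop solution prescribed in the statement. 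Taking the supremum over admissible tuples in the reduced Principal problem \eqref{bilevel:reduced} and invoking Corollary \ref{mfgcharacterization} to identify this supremum with $V^P_0$ concludes the verification; the MFG fixed-point constraint $\mu^\star_t = \rho_t^x$ is built into the closed-loop McKean--Vlasov system by construction.

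The principal obstacle is the rigorous justification of the It\^o--Lions formula with a state-, law- and control-dependent jump compensator $K(t,x,\zeta,\mu,\alpha)\lambda_t^0(\d\zeta)$: the nonlocal jump operator in $\L^{z,u,\alpha,\mu}_t$ must be linearised through $D_m V$ on a Lions-lifted copy of $(\Omega,\F,\PP)$, which is precisely where the cited results \cite{burzoni2020viscosity,guo2020s} are essential. Once this formula is secured, the remaining arguments reduce to the classical verification scheme above, modulo the delicate martingale-integrability checks that are the main technical cost.
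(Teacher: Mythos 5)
Your proposal is correct and follows essentially the same route as the paper, whose proof simply invokes Theorem 4.1 of \cite{guo2020ito} together with their It\^o formula for flows of measures (Theorem 3.3): you are spelling out the standard verification argument that the paper delegates to that citation. The only cosmetic point is that the It\^o formula along the deterministic flow $t\mapsto\rho_t$ already carries the expectation inside $\langle\rho_t,\cdot\rangle$, so no residual martingale $M$ survives at the level of $V(t,\rho_t)$; the integrability checks you describe are nonetheless exactly what is needed to justify that formula and the passage to the limit in the localisation.
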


\begin{proof}
The proof is a slight modification of Theorem 4.1 in \cite{guo2020ito} using their Theorem 3.3. We notice that as an optimizer, $\alpha^\star_t$ serves a different role and is determined continuously given any $Z^\star_t$, $U^\star_t$ prior to solving the HJB equation. Now the only difference would be the dimension and the fact that $Z^\star_t$ and $U^\star_t$ only depend on the law, not on the state. In this case, the Ito's formula of the flow of measure still holds due to a similar argument of Theorem 3.3 in their paper under two dimensions and the rest of the proof follows.
\end{proof}

\section{Example: energy demand-response pricing under blackout or sudden shortages}\label{section:electricity}
In this section, we study an explicit model where the dynamic of the output process depends on its mean and its variance and L\'evy process is a purely discontinuous process with controlled intensity and jump size one. The example proposed is motivated by  \cite{aid2022optimal,elie2021mean} to random powercut or shortage occurrences. This framework is greatly inspired by the current energy sobriety policy in Europe either for ecological motivations\footnote{See for example \url{https://www.eceee.org/all-news/news/energy-sobriety-a-disruptive-notion-catching-on-in-france/} or \url{https://www.pge.com/en_US/residential/save-energy-money/savings-programs/savings-programs-overview/savings-programs-overview.page}} or under energy blackout threatening. 

\subsection{The model and the value functions} We consider the following output process representing either the profit and loss of the energy producer or the difference between the baseline consumption of a crowd of similar consumers and their current consumption:  \begin{equation*}
  \d X_t = \big(\alpha^0_t + k_1\E[X_t]\big) \d t + \sigma \d W_t - \d J_t,
\end{equation*}
where
\begin{itemize}
\item $\alpha^0$ is the control made by consumers to reduce the distance between their consumption and their usual baseline, note that $\alpha^0$ can be positive or negative. When $X\geq 0$, the consumers are under their current baseline. This case is wished by the energy producer caring about energy sobriety. 
\item $k_1\in \mathbb R$ is a synergistic parameter,
\item $J_t$ represent the accident occurrence at time $t$ with frequency $e^{-k_2\alpha^1_t}\text{Var}(X_t)$ for $k_2>0$. We assume that this frequency of blackout/shortage can be reduced by consumers' efforts $\alpha^1$ on the one hand, but is very sensitive with respect to the variability of energy consumption on the other hand. It enables us to extend the main stylized facts introduced in \cite{aid2022optimal,elie2021mean} to accidents occurring when the network is subjected to variability either endogenous (excessive strains) or exogenous (political tensions with exporting energy country).  
\end{itemize}

The value function for a crowd of agents (that is a crowd of similar consumers)
\begin{equation*}
  v_0(\alpha,\mu,\xi) = \E^{{\alpha,\mu}} \bigg[U_A\Big(\xi-\int_0^T\Big(\frac{|\alpha^0_t|^2}{2}-f(t,X_t)\Big)\d t 
  + \int_0^T  \alpha^1_t \d J_t\Big)\bigg],
\end{equation*}
where $f:[0,T]\times \mathbb R\longrightarrow \mathbb R$ is decreasing with respect to the $X$ variable so that the consumers wins utility by increasing their energy consumption. The value function for the Principal (the energy producer) is given by
\begin{equation*}
  V_0^P=\sup\E^{\alpha,\mu}\bigg[\beta X_T-\xi-\int_0^Tg(t,X_t)\d t -\frac{\theta}{2}\text{Var}(X_T)\bigg],
\end{equation*}
such that $g:[0,T]\times \mathbb R\longrightarrow \mathbb R$ is decreasing with respect to the $X$ variable so that the producer wins utility when $X$ increases, fitting with energy sobriety policies; $\beta,\theta\in \mathbb R^+$. As in \cite{aid2022optimal,elie2021mean}, we assume in addition that $f$ and $g$ are continuously differentiable and $f(t,x)-g(t,x)=\delta x$ for some $\delta\in \mathbb R$.\\

\subsection{HJB equation on the space of measures and optimal tarification}
Translating to our previous setting in Section \ref{section:model}, we have 
{\begin{equation}\label{example:parameters}
\begin{aligned}
&\sigma(t,x)=\sigma,\; \alpha=(\alpha^0,\alpha^1),\; b(t,x,\mu,\alpha) = \alpha^0 + k_1m(\mu),\\
 &\ell=0,\quad c_0(t,x,\alpha,\mu) =\frac{|\alpha^0|^2}{2}-f(t,x),\quad c_1(t,x,z,\alpha,\mu)=\alpha^1,\quad 
  \\
  &\lambda_t^0(\d \zeta) = \delta_{-1}(\d \zeta), \qquad K(t,x,z,\mu,\alpha) = e^{-k_2\alpha^1}\text{Var}(\mu),
  \end{aligned}
\end{equation} }
where $m(\mu)=\int \tilde x\mu (\d \tilde x)$ and $\text{Var}(\mu)=\int \tilde x^2\mu (\d \tilde x)-\big(\int \tilde x\mu (\d \tilde x)\big)^2$ and $\delta_{-1}$ represent the Dirac measure at $-1$.\\

Hence, by using the same notations that those in Section \ref{section:MFG}, we have \begin{equation*}
  \begin{aligned}
  h(t,x,\alpha,z,u,\mu)=&f(t,x)+z\alpha^0-\frac{|\alpha^0|^2}{2}+z k_1m(\mu)-\frac{\gamma z^2\sigma^2}{2}+\frac{1-e^{-\gamma(u(-1)+\alpha^1)}}{\gamma}e^{-k_2\alpha^1}\text{Var}(\mu),
  \end{aligned}
\end{equation*}
Therefore, the optimal effort of the system of similar agents is $\alpha^\star=(  \alpha^{0,\star},   \alpha^{1,\star})$ with
\begin{equation*}
  \alpha^{0,\star}=z,\;\;\alpha^{1,\star}=\frac1\gamma\log\big(1+\frac\gamma {k_2}\big)-u(-1),
\end{equation*}
and consequently, 
\begin{equation*}
  \begin{aligned}
  \hat H(t,x,z,u,\mu)=&\delta x+\frac{1}{2}z^2-\frac{\gamma\sigma^2}{2}z^2+z k_1m(\mu)+\frac{1}{\gamma+k_2}\Big(\frac{\gamma+k_2}{k_2}\Big)^{-\frac {k_2}\gamma}e^{k_2u(-1)}\text{Var}(\mu),
  \end{aligned}
\end{equation*}
where $\hat H$ is defined in Section \ref{section:MFPDE}. 
Furthermore, the operator $\L$ can be simplified as follow
\begin{equation*}
  \begin{aligned}
  \L^{z,u,\mu}_t v (x,y) = &b\big(t,\alpha^\star,x,\mu\big)\big(\partial_x v(x,y)+z\partial_y v(x,y)\big) -\hat H(t,x,z,u,\mu)\partial_y v(x,y)
  \\
  &+\frac12 \sigma^2(t,x)\big(\partial_{xx}v (x,y)+2z\partial_{xy}v (x,y)+z^2\partial_{yy}v (x,y)\big)
  \\
  &+K\big(t,x,z,\mu,\alpha^\star\big) \big(v(x-1,y+u(-1))-v(x,y)\big).
  \end{aligned}
\end{equation*}
We aim to find an explicit solution of \eqref{HJB}. We guess that $V(t,\mu)$ has the following form:
\begin{equation*}
  \begin{aligned}
  V(t,\mu)=&h_0(t)+h_1(t)m(\mu_x)+h_2(t)\text{Var}(\mu_x)-m(\mu_y),
  \end{aligned}
\end{equation*}
for some function $h_0,h_1,h_2$ to be determined. In this case, we have
\begin{equation*}
  \begin{aligned}
  D_mV(t,\mu,x,y) = &h_1(t)x+h_2(t)\big(x^2-2m(\mu_x)x\big)-y.
  \end{aligned}
\end{equation*}
Set $v(x,y):=D_mV(t,\mu,x,y)$, then 
\begin{equation*}
\partial_x v(x,y) = h_1(t)+2h_2(t)\big(x-m(\mu_x)\big),\;\;\partial_{xx}v=2h_2(t),\;\;\partial_y v(x,y)= -1,
\end{equation*}
and $v(x-1,y+u)-v(x,y)=-\partial_x v+ u(0) \cdot\partial_y v+\partial_{xx} v / 2$. We denote $b^\star=b(t,x,\alpha^\star,\mu)=z+k_1m(\mu)$ and $K^\star=K(t,x,z,\mu,\alpha^\star)=\big(\frac{\gamma+k_2}{k_2}\big)^{-k_2/\gamma}e^{k_2u(-1)}\text{Var}(\mu_x)$, then
\begin{equation*}
  \begin{aligned}
  \L^{z,u(\cdot),\mu_x}_tD_m V = &\,\partial_x v \cdot (b^\star-K^\star) + \partial_y v (b^\star z+K^\star u(-1)-H)+h_2(t)\lambda^\star+\frac{1}{2}\partial_{xx}v\sigma^2
   \\
  =&\,z\partial_x v-\frac{z^2}2(1+\gamma\sigma^2) +k_1\partial_x vm(\mu_x)+h_2(t)\sigma^2+\delta x
  \\
  &+ \Big(\frac{\gamma+k_2}{k_2}\Big)^{-\frac {k_2}\gamma}\Big(\frac{1}{\gamma+k_2}-u-\partial_x v+h_2(t)\Big)e^{k_2u(-1)}\text{Var}(\mu_x).
  \end{aligned}
\end{equation*}
Then, we have
\begin{equation*}
  \begin{aligned}
  \langle\mu,\L^{z,u(\cdot),\mu_x}_tD_m V\rangle =& z\langle\mu,\partial_x v\rangle-\frac{z^2}2(1+\gamma \sigma^2) + \big(k_1\langle \mu, \partial_x v\rangle+\delta\big) m(\mu_x)+h_2(t)\sigma^2  
  \\
  &+ \Big(\frac{\gamma+k_2}{k_2}\Big)^{-\frac {k_2}\gamma}\Big(\frac{1}{\gamma+k_2}+u(0)-\langle\mu,\partial_x v\rangle+h_2(t)\Big)e^{k_2u(-1)}\text{Var}(\mu_x),
  \end{aligned}
\end{equation*}
To maximize $\langle\mu,\L^{z,u,\mu_x}_tD_m V\rangle $ over $z,u$, we have 
\begin{equation*}
z^\star=\frac{\langle\mu,\partial_x v\rangle}{1+\gamma \sigma^2}, \qquad u^\star(-1)=\frac{1}{\gamma+k_2}-\langle\mu,\partial_x v\rangle+h_2(t)-\frac{1}{k_2}.
\end{equation*}
Therefore, 
\begin{equation*}
  \begin{aligned}
     \sup_{z,u}\langle\mu,\L^{z,u,\mu_x}_tD_m V\rangle =&\frac{\langle\mu,\partial_x v\rangle^2}{2(1+\gamma \sigma^2)}+\big(k_1\langle \mu, \partial_x v\rangle+\delta\big) m(\mu_x)+h_2(t)\sigma^2
     \\
     &+\frac{1}{k_2}\Big(\frac{\gamma+k_2}{k_2}\Big)^{-\frac {k_2}\gamma}e^{-\frac{\gamma}{\gamma+k_2}-k_2\langle\mu,\partial_x v\rangle+k_2h_2(t)}\text{Var}(\mu_x).
  \end{aligned}
\end{equation*}
Recall the definition of $\partial_x v$, we have $\langle\mu,\partial_x v\rangle=h_1(t)$, plugging into the previous term, we have
\begin{equation*}
  \begin{aligned}
    \sup_{z,u}\langle\mu,\L^{z,u,\mu_x}_tD_m V\rangle = &\frac{h_1(t)^2}{2(1+\gamma\sigma^2)}+h_2(t)\sigma^2+\big(h_1(t)k_1+\delta\big)m(\mu_x)
    \\
    &+\frac{1}{k_2}\Big(\frac{\gamma+k_2}{k_2}\Big)^{-\frac {k_2}\gamma}e^{-\frac{\gamma}{\gamma+k_2}-k_2h_1(t)+k_2h_2(t)}\text{Var}(\mu_x).
  \end{aligned}
\end{equation*}
To prove that $-\partial_t V(t,\mu) - \sup_{z,u} \big\langle \mu,\L^{z,u,\mu_x}_t D_m V\big\rangle=0$, we have to solve
\begin{equation*}
(\text{ODE})\begin{cases}
  h_2'(t)+\frac{1}{k_2}\big(\frac{\gamma+k_2}{k_2}\big)^{-k_2/\gamma}e^{-\frac{\gamma}{\gamma+k_2}-k_2h_1(t)+k_2h_2(t)}=0,\;\;h_2(T)=-\frac{\theta}{2},
  \\
  h_1'(t)+h_1(t)k_1+\delta=0,\;\;h_1(T)=\beta,
  \\
  h_0'(t)+\frac{h_1(t)^2}{2(1+\gamma\sigma^2)}+h_2(t)\sigma^2=0, \;\;h_0(T)=0.
\end{cases}
\end{equation*}
The ODE for $h_1(t)$ in (ODE) can be solved explicitly and we get $h_1(t)=\big(\beta+\frac{\delta}{k_1}\big)e^{k_1(T-t)}-\frac{\delta}{k_1}$. Now plugging back to $h_2(t)$, we have
\begin{equation*}
  {\big(e^{-k_2 h_2(t)}\big)} '=\Big(\frac{\gamma+k_2}{k_2}\Big)^{-k_2/\gamma}e^{-\frac{\gamma}{\gamma+k_2}-k_2h_1(t)},
\end{equation*}
Therefore 
\begin{equation*}
  \begin{aligned}
  e^{-k_2h_2(t)} =& e^{\frac{k_2\theta}{2}}-\Big(\frac{\gamma+k_2}{k_2}\Big)^{-k_2/\gamma}e^{-\frac{\gamma}{\gamma+k_2}+\frac{k_1\delta}{k_2}}\int_0^{T-t} e^{-(\beta+\frac{\delta}{k_1})k_2e^{k_1s}}\d s
  \end{aligned}
\end{equation*}
For small time interval $T$, the right hand side is strictly larger than $0$, and we get 
\begin{equation*}
  h_2(t) = -\frac{1}{k_2}\ln\Big(e^{\frac{k_2\theta}{2}}-\Big(\frac{\gamma+k_2}{k_2}\Big)^{-k_2/\gamma}e^{-\frac{\gamma}{\gamma+k_2}+\frac{k_1\delta}{k_2}}\int_0^{T-t} e^{-(\beta+\frac{\delta}{k_1})k_2e^{k_1s}}\d s\Big),
\end{equation*}
and 
\begin{equation*}
  h_0(t) = \int_t^T \Big(\frac{h_1(s)^2}{2(1+\gamma\sigma^2)}+h_2(s)\sigma^2\Big)\d s.
\end{equation*} 

Consequently, the optimal incentive parameters $Z^\star$ and $U^\star$ are given by

\begin{equation}\label{optimalcontrol}
Z^\star_t=\frac{h_1(t)}{1+\gamma \sigma^2}, \qquad U_t^*(-1)=\frac{1}{\gamma+k_2}-h_1(t)+h_2(t)-\frac{1}{k_2}.
\end{equation}
Therefore, the optimal effort of the consumers are
\begin{equation}\label{optimalcontrol:agent}
\alpha_t^{0,\star}=\frac{h_1(t)}{1+\gamma \sigma^2},\;\;\alpha_t^{1,\star}=\frac1\gamma\log\big(1+\frac\gamma {k_2}\big)-\frac{1}{\gamma+k_2}+h_1(t)-h_2(t)+\frac{1}{k_2}.
\end{equation}
\subsection{Admissibility of the proposed optimal policy} We now have to check that the optimal controls given in \eqref{optimalcontrol} satisfied the conditions $(\mathcal I_\alpha),(\mathcal I_-),(\mathcal I_{\xi})$ and $(\text{MF})$. It is clear that $(\mathcal I_\alpha),(\mathcal I_-),(\mathcal I_{\xi})$ hold since $Z^\star$ and $U^\star$ are bounded. We focus on the existence of a mean field equilibrium for these optimal controls.
Recall that
\begin{equation*}
  \begin{aligned}
  b^\star&=\alpha^{1,\star}_t+k_1m(\mu_x)=Z_t^\star+k_1m(\mu_x)=\frac{h_1(t)}{1+\gamma\sigma^2}+k_1m(\mu_x),\\
  K^\star&=\Big(\frac{\gamma+k_2}{k_2}\Big)^{-\frac {k_2}{\gamma}}e^{-k_2U_t^\star(-1)}\text{Var}(\mu_x)=\Big(\frac{\gamma+k_2}{k_2}\Big)^{-\frac {k_2}\gamma}e^{-\frac{\gamma}{\gamma+k_2}-k_2h_1(t)+k_2h_2(t)}\text{Var}(\mu_x).
  \end{aligned}
\end{equation*}
Putting back into the diffusion of $X_t$, we have
\begin{equation}\label{X:opt}
  \d X_t = b^\star\d t + \sigma \d W_t^{\mu,\alpha^\star} -\d J_t
\end{equation}
Taking expectation, we get
\begin{equation*}
  \begin{aligned}
  &\frac{\d m(\mu_{X_t})}{\d t} = \frac{h_1(t)}{1+\gamma\sigma^2}+k_1m(\mu_{X_t})-h(t)\text{Var}(\mu_{X_t}),
  \end{aligned}
\end{equation*}
with 
\begin{equation*}
  h(t):=\Big(\frac{\gamma+k_2}{k_2}\Big)^{-\frac {k_2}\gamma}e^{-\frac{\gamma}{\gamma+k_2}-k_2h_1(t)+k_2h_2(t)}.
\end{equation*}

Using Ito's formula, we have
\begin{equation*}
  \d |X_t|^2 = (2X_{t-}b^\star+\sigma^2)\d t + \d W_t^{\mu,\alpha^\star} - (2X_{t-}+1)\d J_t,
\end{equation*}
Taking expectation, and we denote $q(\mu)=\int \tilde x ^2\mu (\d \tilde x)$, we get
\begin{equation*}
  \frac{\d q(\mu_{X_t})}{\d t} = 2k_1 |m(\mu_{X_t})|^2 +\sigma^2+\frac{2h_1(t)}{1+\gamma\sigma^2}m(\mu_{X_t}) - h(t)(2m(\mu_{X_t})+1)\text{Var}(\mu_{X_t})
\end{equation*}
To conclude, we have the following ODE system of $(m(t),q(t))=(m(\mu_{X_t}),q(\mu_{X_t}))$:
\begin{equation}\label{ode:moment}
 \begin{cases}
  m'(t) = \frac{h_1(t)}{1+\gamma\sigma^2}+k_1m(t)-h(t)\big(q(t)-|m(t)|^2\big),
  \\
  q'(t) = 2k_1 |m(t)|^2 +\sigma^2+\frac{2h_1(t)}{1+\gamma\sigma^2}m(t) - h(t)(2m(t)+1)\big(q(t)-|m(t)|^2\big),
  \end{cases}
\end{equation}
To simplify, let us denote $v(t):=q(t)-|m(t)|^2$ as the variance process, then we have
\begin{equation*}
  \begin{cases}
  m'(t) = k_1m(t)-h(t)v(t)+\frac{h_1(t)}{1+\gamma\sigma^2},
  \\
  v'(t) = h(t)v(t)+\sigma^2.
  \end{cases}
\end{equation*}
and existence and uniqueness holds for all time since it is a linear ODE system. Since there is a unique optimizer $\alpha^\star$ with deterministic optimal control $Z^\star$ and $U^\star$ and a unique solution to (ODE), we deduce from Corollary \ref{mfgcharacterization} the existence of a unique mean field equilibrium, so that $\alpha^\star,Z^\star,U^\star$ satisfy (MF).  

\subsection{Optimal incentive policy}
Following all the previous computations, the solution to the problem in this example is given by the following proposition.
\begin{proposition}\label{prop:example}
The compensation $\xi^\star=Y_T^{\hat R_0,Z^\star,U^\star}$ with parameters $(Z^\star,U^\star)$ defined by \eqref{optimalcontrol} and the pair $(\alpha^\star,\mu^\star)$ given by \eqref{optimalcontrol:agent} with $\mu^\star$ being the law of $X$ defined by \eqref{X:opt} with fixed point first and second moment solving \eqref{ode:moment} are the unique optimizers of the bilevel programming \eqref{bilevel} {with parameters given by \eqref{example:parameters}}. 
\end{proposition}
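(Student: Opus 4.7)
The plan is to apply the verification result Theorem \ref{thm:verif} to the candidate value function
\[ V(t,\rho) = h_0(t) + h_1(t)\, m(\rho^x) + h_2(t)\, \Var(\rho^x) - m(\rho^y) \]
constructed above, with the candidate controls $(Z^\star, U^\star, \alpha^\star)$ given by \eqref{optimalcontrol} and \eqref{optimalcontrol:agent}, and then to deduce uniqueness from strict concavity of the Hamiltonian combined with Corollary \ref{mfgcharacterization}.

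The first step is to check the hypotheses of Theorem \ref{thm:verif}. The derivative $D_m V(t,\rho,x,y) = h_1(t) x + h_2(t)(x^2 - 2 m(\rho^x) x) - y$ is polynomial of degree two in $x$, so $\nabla_x D_m V$ is affine in $x$ and $\nabla_x^2 D_m V = 2 h_2(t)$ is spatially constant; the required bound on $\int (|\nabla_x D_m V|^2 + |\nabla_x^2 D_m V|^2)\, \mu(\d x)$ over compact sets of $\P_2^2([0,T])$ is therefore immediate. The computation carried out in the preceding paragraphs reduces the HJB equation \eqref{HJB} to the system (ODE), whose components are explicit: $h_1$ solves a linear first-order ODE in closed form, $e^{-k_2 h_2}$ is obtained by direct integration, and $h_0$ by a final quadrature, all well-defined on $[0,T]$ provided $T$ is small enough to keep the argument of the logarithm in the expression for $h_2$ strictly positive. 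The pointwise maximizers $Z^\star(t)$, $U^\star(t,-1)$, and $\alpha^\star(t)$ are then deterministic, bounded and continuous in $t$, so the continuity hypothesis holds.

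Admissibility and the mean-field fixed point are checked next. Because $Z^\star$ and $U^\star$ are bounded and deterministic, the conditions $(\I_\xi), (\I_\alpha), (\I_-)$ and item (iii) of Definition \ref{def:Q} reduce to elementary exponential moment bounds for a Brownian plus compound Poisson process and are satisfied, so $(Z^\star, U^\star) \in \Q$ and $\alpha^\star \in \hat{\mathcal A}$. Taking expectations in \eqref{X:opt} and applying It\^o's formula to $X_t^2$ yields the decoupled system \eqref{ode:moment}, equivalently $v'(t) = h(t) v(t) + \sigma^2$ for the variance and a linear ODE for the mean; both admit a unique global solution. With these deterministic moments, the McKean-Vlasov SDE for $X$ collapses to a standard SDE with bounded deterministic drift and deterministic jump intensity, which admits a unique strong solution whose marginals are $\mu^\star_t$. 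Theorem \ref{thm:verif} now yields $V(0,\rho_0) = V_0^P$ and optimality of $(\xi^\star, \alpha^\star, \mu^\star)$.

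For uniqueness, the $z$-slice of $\langle \rho, \L_t^{z,u,\alpha^\star,\rho^x} D_m V\rangle$ is strictly concave with Hessian $-(1+\gamma \sigma^2) < 0$, giving a unique $Z^\star$; and the $u(-1)$-dependent term has the form $(A - u(-1))\, e^{k_2 u(-1)}\, \Var(\rho^x)$ up to a positive constant, with $A$ determined by $\rho$ and $h_2$, whose unique global maximum is attained at $u^\star(-1) = A - 1/k_2$ as soon as $\Var(\rho^x) > 0$. The latter nondegeneracy is ensured by $v'(t) = h(t) v(t) + \sigma^2$ together with $v(0) \geq 0$, which gives $v(t) \geq \sigma^2 t > 0$ for $t > 0$. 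Combined with uniqueness of the ODE and moment systems, Corollary \ref{mfgcharacterization} then yields uniqueness of $(\xi^\star, \alpha^\star, \mu^\star)$. The main obstacle I anticipate is the global-in-time definition of $h_2$ and the associated well-posedness of the McKean-Vlasov SDE with variance-dependent jump intensity; both are handled here by the smallness-of-$T$ condition already invoked to construct $h_2$, which after moment decoupling reduces SDE well-posedness to a standard result for L\'evy-driven SDEs.
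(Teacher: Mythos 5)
Your proof follows essentially the same route as the paper, which establishes the proposition through the preceding computations: the quadratic ansatz for $V$ reducing \eqref{HJB} to the system (ODE), the verification Theorem \ref{thm:verif}, the admissibility of the bounded deterministic controls $(Z^\star,U^\star)$, the moment ODEs \eqref{ode:moment} for the mean-field fixed point, and uniqueness via the unique maximizers together with Corollary \ref{mfgcharacterization}. Your additional remarks on the positivity of $\Var(\rho^x)$ for pinning down $u^\star(-1)$ and on verifying the derivative bound in Theorem \ref{thm:verif} are points the paper passes over silently, but they do not change the argument.
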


\newcommand{\etalchar}[1]{$^{#1}$}
\providecommand{\bysame}{\leavevmode\hbox to3em{\hrulefill}\thinspace}
\providecommand{\MR}{\relax\ifhmode\unskip\space\fi MR }
\providecommand{\MRhref}[2]{%
  \href{http://www.ams.org/mathscinet-getitem?mr=#1}{#2}
}
\providecommand{\href}[2]{#2}

\end{document}